\newcommand{\rn}{\mathbb R}
\newcommand{\rnnn}{\mathbb R^{n+1}}
\newcommand{\ry}{\mathbb R^{3}}
\newcommand{\sn}{ {\mathbb{S}^{n}}}
\newcommand{\ti}{\tilde}
\newcommand{\bbS}{\mathbb{S}}
\newcommand{\al}{\alpha}
\newcommand{\de}{\delta}
\newcommand{\la}{\lambda}
\newcommand{\si}{\sigma}
\renewcommand{\(}{\left(}
\renewcommand{\)}{\right)}
\newcommand{\psum}{{+_{\negthinspace\kern-2pt p}}\,}
\newcommand{\qsum}[1]{{+_{\negthinspace\kern-2pt #1}}\,}
\newcommand{\dpsum}{{\tilde+_{\negthinspace\kern-1pt p}}\,}
\newcommand{\dqsum}[1]{{\tilde+_{\negthinspace\kern-1pt #1}}\,}
\newcommand{\lsub}[1]{\hskip -1.5pt\lower.5ex\hbox{$_{#1}$}}
\numberwithin{equation}{section}
\newtheorem{theo}{Theorem}[section]
\newtheorem{lem}[theo]{Lemma}
\newtheorem{prop}[theo]{Proposition}
\newtheorem{rem}[theo]{Remark} \theoremstyle{definition}
\newtheorem{defi}[theo]{Definition}
\begin{document}

\title{The $L_{p}$  dual Christoffel-Minkowski problem for $1<p<q\leq  k+1$ with $1\leq k\leq n$
}
\author[C. Cabezas]{Carlos Cabezas-Moreno}
\address{Institut f\"{u}r Diskrete Mathematik und Geometrie, Technische Universit\"{a}t Wien, Wiedner Hauptstrasse 8-10, 1040 Wien, Austria
 }
\email{carlos.moreno@tuwien.ac.at}

\author[J. Hu]{Jinrong Hu}
\address{Institut f\"{u}r Diskrete Mathematik und Geometrie, Technische Universit\"{a}t Wien, Wiedner Hauptstrasse 8-10, 1040 Wien, Austria
 }
\email{jinrong.hu@tuwien.ac.at, Hu\_jinrong097@163.com}

\begin{abstract}
In this paper, we investigate an $L_{p}$ Christoffel-Minkowski-type problem that prescribes a class of $L_p$ geometric measures, which are mixtures of the $k$-th area measure and the $q$-th dual curvature measure. By establishing a gradient estimate, we obtain the existence of an even, smooth, strictly convex solution to this problem for $1 < p < q \leq k + 1$, where $1 \leq k \leq n$ and $n \geq 1$.
\end{abstract}
\keywords{$L_{p}$ dual Christoffel-Minkowski problem, $L_{p}$ dual-Minkowski problem, constant rank theorem.}

\subjclass[2010]{35A02, 52A20}

\thanks{This work was supported by the Austrian Science Fund (FWF): Project P36545.}

\maketitle

\baselineskip18pt

\parskip3pt

\section{Introduction}

The $L_p$ dual Christoffel-Minkowski problem involves finding a convex body that satisfies the following fully nonlinear partial differential equation:
\begin{equation}\label{PQ}
\sigma_{k}(\nabla^{2}h + hI) = f h^{p-1} (h^{2} + |\nabla h|^{2})^{\frac{k+1-q}{2}}, \ {\rm on} \ \ \sn,
\end{equation}
where $1 \leq k \leq n$ is an integer, $\sigma_k(\nabla^{2}h + hI)$ is the $k$-th elementary symmetric function of the principal radii of curvature, $\nabla h$ is the standard spherical gradient of $h$, $\nabla^{2}h+hI$ is the spherical Hessian of $h$ with respect to a local orthonormal frame on $\sn$, and $I$ is the identity matrix. Equation \eqref{PQ} encompasses a variety of well-known Minkowski-type problems.

When $p=1$ and $q=k+1$, equation \eqref{PQ} corresponds to the Christoffel-Minkowski problem of characterizing the $k$-th area measure. The study of the Christoffel problem for $k=1$ was initiated by Christoffel \cite{Cr85}, further developed by  Berg \cite{Be96} and Firey \cite{F67,F68}. For $k=n$, the existence, regularity, and uniqueness of the Minkowski problem were investigated by Minkowski \cite{M897,M903}, Aleksandrov \cite{A39,A42}, Nirenberg \cite{N53}, Pogorelov \cite{P78}, Cheng-Yau \cite{CY76}, and others. In the intermediate case $1<k<n$, the Christoffel-Minkowski problem has been extensively studied, with major contributions from Firey \cite{Fir70}, Guan-Ma \cite{GM03}, Guan-Lin-Ma \cite{GLM06}, Guan-Ma-Zhou \cite{GMZ06}, and Bryan-Ivaki-Scheuer \cite{BIS23a}.

In the case of $q = k + 1$, equation \eqref{PQ} reduces to the $L_p$ Christoffel-Minkowski problem. When $k = n$, the $L_p$ Minkowski problem was first formulated by Lutwak \cite{L93} and solved by Lutwak and Oliker in the even regular case \cite{LO95}. Since then, the study of this problem has undergone rapid development, as demonstrated by a substantial body of research, including \cite{B19,BLYZ12,B17,CL17,CW06,LW13,LYZ04,Zhu14,Zh15,Zhu15}. For $1 < k < n$, the $L_p$ Christoffel-Minkowski problem has been studied by Hu-Ma-Shen \cite{HMS24} for the case $p > k + 1$ (see also Ivaki \cite{Iva19} and Sheng-Yi \cite{SY20}), while Guan-Xia \cite{GX18} studied the case  $1 < p < k + 1$.

For $k = n$, equation \eqref{PQ} corresponds to the $L_p$ dual Minkowski problem of prescribing the $(p,q)$-th dual curvature measure, first introduced by Lutwak-Yang-Zhang \cite{LYZ18}. The existence of solutions has been extensively investigated. Utilizing a variational approach, Huang-Zhao \cite{HZ18} established existence results for $p > 0$, $q < 0$, for $p, q > 0$, and for $p, q < 0$ under the assumption $p \neq q$ with symmetry conditions. B\"{o}r\"{o}czky-Fodor \cite{BF19} extended these results from the symmetric setting to the general case for $p > 1$, $q > 0$, and $p \neq q$. Further generalizations were obtained by Chen-Huang-Zhao \cite{CHZ19} for symmetric solutions when $pq \geq 0$, and by Chen-Li \cite{CL21}, who employed Gauss curvature flows to establish existence for a broader range with $p > 0, q \neq n + 1$ or $q\leq p <0$.

A fundamental challenge in the $L_p$ dual Minkowski problem lies in establishing solvability for $p \leq 0$ and $q > 0$. This range of $p,q$ encompasses both the logarithmic Minkowski problem ($p = 0$, $q = n + 1$) and the centro-affine Minkowski problem ($p = -n - 1$, $q = n + 1$). In this setting, the case $q > 0$ and $-q^* < p < 0$ was examined by Chen-Chen-Li \cite{CCL21} under symmetry assumptions and was further extended by B\"{o}r\"{o}czky-Kov\'{a}cs-Mui-Zhang \cite{BKMZ24} in a more general framework. Mui \cite{Mu24} investigated the case $q < p + 1$, $-1 < p < 0$, and $p \neq q$ for symmetric solutions.

Regarding uniqueness of the $L_{p}$ dual Minkowski problem, Huang-Zhao \cite{HZ18} established results for $p > q$, while Li-Liu-Lu \cite{LLJ22} provided counterexamples demonstrating nonuniqueness for $p < 0 < q$. However, the uniqueness question remains largely unresolved in the $p < q$ range.

A particularly interesting special case arises when $p = 0$ and $k=n$, where the $L_p$ dual Minkowski problem reduces to the dual Minkowski problem. This problem was initially introduced and studied by Huang-Lutwak-Yang-Zhang \cite{HLYZ16} for symmetric convex bodies in the range $0 < q < n + 1$. Subsequent developments have led to a rich theory known as the dual Brunn-Minkowski theory, yielding numerous significant results and applications in \cite{BLYZ19,JW17,LSW20,Z17,Z18} and references therein.

The uniqueness of solutions to the dual Minkowski problem has also been widely studied. Zhao \cite{Z17} established uniqueness for $q < 0$, while the case $q = 0$ follows from Aleksandrov's classical uniqueness theorem for integral curvature. For the logarithmic Minkowski problem ($q = n + 1$), various uniqueness results have been obtained: B\"{o}r\"{o}czky-Lutwak-Yang-Zhang \cite{BLYZ13} proved uniqueness for even measures when $n = 1$, while Chen-Huang-Li-Liu \cite{CY20}, building upon local results of Kolesnikov-Milman \cite{KM22}, established uniqueness for densities $f$ close to 1 in the $C^{\alpha}$ norm under symmetry assumptions with $0<\alpha<1$. This result was extended by Chen-Feng-Liu \cite{CFL22} to the non-symmetric setting in $\ry$. More recently, B\"{o}r\"{o}czky-Saroglou \cite{BS24} and Hu-Ivaki \cite{HI25} independently extended the result of Chen-Feng-Liu to higher dimensions using different approaches, while Ivaki-Milman \cite{IM25} established uniqueness in the even logarithmic Minkowski problem under curvature pinching conditions.

Despite these advances, the uniqueness of the dual Minkowski problem for $0 < q \neq n + 1$ remains largely open. Recently, Hu \cite{HJ24} obtained the related uniqueness results of symmetric solutions for densities $f$ close to 1 in the $C^{\alpha}$ norm provided $0 < q \leq  n$ for $1 \leq n \leq 3$ or $n - 3 \leq q \leq n$ for $n > 3$.

Concerning the existence and uniqueness of solutions to the $L_p$ dual Christoffel-Minkowski problem \eqref{PQ} for the general case $1 \leq k < n$, several results have been established.  Ding-Li \cite{DL23} addressed the cases $p > q$ with $p > 1$ and $q < p < 0$ by virtue of flow methods. Meanwhile, Chen-Tu-Xiang \cite{CTX25} settled the case $p \geq q$ with $p \geq 1$. To our knowledge, the case where $p < q$ remains unexplored.

In this paper, by establishing a gradient estimate (see Lemma \ref{Wei}) and a full rank theorem (cf. Theorem \ref{Sph}), we obtain the following existence result of \eqref{PQ} for $1<p<q\leq k+1$ through degree theory.
\begin{theo}\label{Thm1}
Let $1\leq k<n$ and $1<p<q\leq k+1$. Let $f$ be a positive, even and smooth function satisfying
\begin{equation}\label{f1}
\nabla^{2}\left( f^{-\frac{1}{k+p-1}}\right)+f^{-\frac{1}{k+p-1}}I\geq 0.
\end{equation}
Then a smooth, origin-symmetric and strictly convex solution exists to Eq. \eqref{PQ}.
\end{theo}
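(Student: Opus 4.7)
The plan is to apply Leray-Schauder degree theory to a continuous deformation of equation \eqref{PQ}. Setting $\phi := f^{-1/(k+p-1)}$, hypothesis \eqref{f1} reads $\nabla^2 \phi + \phi I \geq 0$, a condition preserved under adding a positive constant. I would therefore interpolate via
\[
\phi_t := (1-t)c_0 + t\phi, \qquad f_t := \phi_t^{-(k+p-1)}, \qquad t \in [0,1],
\]
with $c_0 := \binom{n}{k}^{-1/(k+p-1)}$ chosen so that $h \equiv 1$ solves the $t=0$ equation. Each $f_t$ is positive, even, smooth, and satisfies \eqref{f1}, so Lemma \ref{Wei} and Theorem \ref{Sph} apply uniformly along the deformed family
\[
\sigma_k(\nabla^2 h + h I) = f_t h^{p-1}(h^2 + |\nabla h|^2)^{(k+1-q)/2} \qquad (P_t).
\]

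The core of the proof is to establish uniform a priori estimates for smooth, even, strictly convex solutions $h_t$ of $(P_t)$. For $C^0$, evaluating at extrema of $h_t$ and using $0 \leq \nabla^2 h_t + h_t I \leq h_{\max}\,I$ at a maximum point (resp.\ $\geq h_{\min}\,I$ at a minimum) yields $\sigma_k \leq \binom{n}{k} h_{\max}^k$ and $\sigma_k \geq \binom{n}{k} h_{\min}^k$ there; feeding this back into $(P_t)$ and using $p < q$ gives
\[
h_{\max} \geq (f_{t,\min}/\tbinom{n}{k})^{1/(q-p)}, \qquad h_{\min} \leq (f_{t,\max}/\tbinom{n}{k})^{1/(q-p)}.
\]
Lemma \ref{Wei} controls $|\nabla h_t|/h_t$, so $\log h_t$ is Lipschitz on $\sn$ with a uniform constant, and $h_{\max}/h_{\min}$ is bounded; these one-sided bounds then upgrade to two-sided ones $0 < c \leq h_t \leq C$ uniform in $t$. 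The upper bound $\nabla^2 h_t + h_t I \leq \Lambda I$ on the principal radii follows by applying the maximum principle to the largest principal radius, as is standard for $\sigma_k$-Hessian equations on $\sn$. The uniform lower bound $\nabla^2 h_t + h_t I \geq \lambda I$, which is equivalent to strict convexity and to uniform ellipticity, is supplied by Theorem \ref{Sph}: since $f_t$ satisfies \eqref{f1}, the constant rank theorem forces the Hessian matrix to have full rank with a quantitative lower bound. Higher-order regularity then follows from Evans-Krylov and Schauder theory.

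With these estimates, I would form the bounded open subset $\mathcal{O}$ of the Banach space $X$ of even $C^{2,\alpha}$ functions on $\sn$ consisting of strictly convex support functions with $c/2 < h < 2C$ and $(\lambda/2) I < \nabla^2 h + hI < 2\Lambda I$. Rewriting $(P_t)$ as a compact fixed-point problem $h = T_t h$, the a priori estimates show no solution lies on $\partial \mathcal{O}$, so the Leray-Schauder degree $\deg(\id - T_t, \mathcal{O}, 0)$ is well-defined and constant in $t$. At $t = 0$ the unit sphere is the unique even strictly convex solution; linearizing there and decomposing into spherical harmonics $Y_\ell$ gives the eigenvalue $\binom{n-1}{k-1}(n - \ell(\ell+n-1)) - \binom{n}{k}(k+p-q)$. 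For $\ell = 0$ this equals $\binom{n}{k}(q-p) > 0$; for $\ell = 1$ it equals $-\binom{n}{k}(k+p-q) < 0$ (here $p > 1$ and $q \leq k+1$ are used), but degree-one harmonics are odd and excluded by the evenness of the class; for $\ell \geq 2$ the eigenvalue is strictly negative. Hence the linearization on even functions is invertible, and a standard computation yields a nonzero degree at $t = 0$. Invariance of degree then produces a solution at $t = 1$, proving the theorem.

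The main obstacle I expect is the $C^2$ lower bound: uniform ellipticity is indispensable for closing the continuity argument, and it is exactly where the delicate hypothesis \eqref{f1} enters through the full rank theorem (Theorem \ref{Sph}); without it the equation can degenerate and the degree argument collapses. A secondary but nontrivial step is the $C^0$ estimate, where max/min analysis alone only yields one-sided bounds; the upgrade to two-sided bounds relies on Lemma \ref{Wei} together with the critical restriction $p < q$.
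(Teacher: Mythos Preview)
Your overall strategy matches the paper's: the same homotopy $f_t=\phi_t^{-(k+p-1)}$, a priori estimates, the full rank theorem for convexity, and degree theory with the linearization at $h\equiv 1$. However, your $C^0$ argument contains a genuine error. Lemma~\ref{Wei} gives $(h^2+|\nabla h|^2)/h^{\gamma}\le M R^{2-\gamma}$ only for $0<\gamma<2(p-1)/k\le 2$, so one obtains $|\nabla h|^2\le M R^{2-\gamma}h^{\gamma}$ with $\gamma$ \emph{strictly} less than $2$. This does \emph{not} control $|\nabla h|/h$: dividing by $h^2$ yields $|\nabla\log h|^2\le M (R/h)^{2-\gamma}$, which blows up as $h\to r$ unless you already know $R/r$ is bounded---precisely what you are trying to prove. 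The paper closes this gap via a separate non-collapsing lemma (Lemma~\ref{COL}, due to Guan), which uses the evenness of $h$ in an essential way to convert the weighted gradient bound into $R/r\le C$. Without this step your Lipschitz claim for $\log h$ is circular.

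Two further points where you understate the work required. First, the assertion that $h\equiv 1$ is the \emph{unique} even strictly convex solution at $t=0$ is not automatic; the paper devotes Section~\ref{Sec4} (Theorem~\ref{UC}) to proving it via an integral identity derived from the spectral Aleksandrov--Fenchel inequality. Second, the $C^2$ upper bound is not ``standard'' when the right-hand side depends on $\nabla h$ and $k<n$: the paper gives a full proof (Theorem~\ref{hc2}) using the Guan--Ren--Wang/Li--Ren--Wang test function $\log P_m+\tfrac{mN}{2}|\nabla h|^2+mM\log h$ with $P_m=\sum_j\lambda_j^m$, which is considerably more delicate than a plain maximum principle on the largest eigenvalue. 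Finally, the constant rank theorem (Theorem~\ref{Sph}) yields positivity of $\nabla^2 h+hI$ but not directly a quantitative lower bound; in the paper's setup the open set $\mathcal{O}$ only requires $\nabla^2 h+hI>0$, and the full rank theorem prevents solutions from reaching that part of $\partial\mathcal{O}$.
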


As previously mentioned, when $k=n$, \eqref{PQ} corresponds to the solvability of the $L_{p}$ dual Minkowski problem:
\begin{equation}\label{PQ2}
\sigma_{n}(\nabla^{2}h+hI)=fh^{p-1}(h^{2}+|\nabla h|^{2})^{\frac{n+1-q}{2}}, \quad {\rm on}\ \sn.
\end{equation}
The existence and uniqueness of solutions to the even $L_{p}$ dual Minkowski problem when $p<q$ can also be derived.

\begin{theo}\label{coro2}
 Let $1<p<q\leq n+1$.   Let $f$ be an even, smooth and positive function on $\sn$. Then  Eq. \eqref{PQ2} has a smooth, origin-symmetric and strictly convex solution. Furthermore, there exists a constant $\varepsilon_{0}>0$ depending only on $n, \alpha $ such that if $||f-1||_{C^{\alpha}}\leq \varepsilon$ for some small $\varepsilon\in (0,\varepsilon_{0})$ and $\alpha\in (0,1)$, then the convex solution is unique.
\end{theo}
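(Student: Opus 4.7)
The plan is to deduce the existence part of Theorem~\ref{coro2} by specializing the continuity/degree-theoretic proof of Theorem~\ref{Thm1} to the case $k=n$, in which $\sigma_{n}$ is the Monge-Amp\`ere operator. In this setting the structural hypothesis~\eqref{f1} is no longer needed: it enters the proof of Theorem~\ref{Thm1} only through the full rank theorem (Theorem~\ref{Sph}), whose only purpose is to guarantee $\nabla^{2}h+hI>0$, and for $k=n$ this strict convexity is automatic for any positive solution of a Monge-Amp\`ere equation with smooth positive right-hand side by Caffarelli's regularity theory. Concretely I would introduce the homotopy $f_{t}:=(1-t)+tf$, start from the solution $h_{0}\equiv 1$ at $t=0$, and apply the Leray-Schauder degree on the open set of even $C^{2,\alpha}$ support functions with $\nabla^{2}h+hI>0$. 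Along this path the $C^{1}$ estimate is Lemma~\ref{Wei}, the $C^{2}$ estimate follows from the Pogorelov/Caffarelli a priori bound for Monge-Amp\`ere together with Evans-Krylov, and the $C^{0}$ estimate is addressed below.

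For the uniqueness part I would linearize
\[F(h,f):=\det(\nabla^{2}h+hI)-f\,h^{p-1}(h^{2}+|\nabla h|^{2})^{\frac{n+1-q}{2}}\]
at the base point $(h,f)=(1,1)$. A direct computation gives
\[\mathcal{L}\vp:=\partial_{h}F(1,1)\cdot\vp=(\De\vp+n\vp)-(p-1+n+1-q)\vp=\De\vp+(q-p)\vp.\]
Restricted to even functions on $\Sn$, the spectrum of $-\De$ is $\{0,\,2(n+1),\,4(n+3),\dots\}$; since $0<q-p\le n<2(n+1)$, the value $q-p$ is never in this spectrum, so $\mathcal{L}\colon C^{2,\alpha}_{\mathrm{even}}(\Sn)\to C^{\alpha}_{\mathrm{even}}(\Sn)$ is an isomorphism. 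The implicit function theorem then yields a $C^{\alpha}$-neighborhood of $f\equiv 1$ and a $C^{2,\alpha}$-neighborhood $U$ of $h\equiv 1$ in which the even solution is unique. To upgrade this to the uniqueness of all even smooth convex solutions claimed in the theorem, I would combine the uniform a priori $C^{2,\alpha}$ bounds from the existence part with a compactness/contradiction argument: every even solution is bounded in $C^{2,\alpha}$ uniformly as $\|f-1\|_{C^{\alpha}}\to 0$, must accumulate on solutions of $F(\cdot,1)=0$, and a Schauder estimate for the difference $h-1$ (using the invertibility of $\mathcal{L}$ and of its nearby perturbations) forces $h$ to enter $U$ once $\varepsilon$ is small enough.

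I expect the most delicate step to be the two-sided $C^{0}$ bound in the regime $1<p<q$. The maximum principle at the extrema of $h$ supplies only one side of each inequality, namely $h_{\max}\ge f_{\min}^{1/(q-p)}$ and $h_{\min}\le f_{\max}^{1/(q-p)}$; the opposing bounds require the origin symmetry of $h$ together with $p>1$. I would follow the integral estimates in B\"or\"oczky-Fodor and Chen-Tu-Xiang: testing the equation against $h$ relates $\int h\,dS_{K}$ to $\int f\,h^{p}(h^{2}+|\nabla h|^{2})^{(n+1-q)/2}\,d\si$, and for origin-symmetric $K$ a \BS-type inequality then controls the ratio $h_{\max}/h_{\min}$. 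Once this $C^{0}$ estimate is in hand, the remainder of the degree argument and the implicit-function-theorem argument for uniqueness proceed in essentially standard fashion.
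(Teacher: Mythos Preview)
Your overall strategy---degree theory for existence via a homotopy to $f\equiv 1$, the linearization $\Delta+(q-p)$ at $h=1$ for both the degree computation and the implicit-function-theorem uniqueness, and the observation that for $k=n$ the convexity condition~\eqref{f1} is unnecessary because $\det(\nabla^{2}h+hI)>0$ already forces strict positivity---is correct and matches the paper. The paper does not spell out the proof of Theorem~\ref{coro2}; it simply observes that once the $C^{0}$ estimate of Lemma~\ref{Bou} holds for $k=n$, the rest follows \cite{HJ24}, which is exactly the IFT/compactness uniqueness scheme you outline.

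The one genuine difference is how the $C^{0}$ bound is obtained. You propose integral/\BS estimates \`a la B\"or\"oczky--Fodor and invoke Lemma~\ref{Wei} only afterwards as a $C^{1}$ estimate. The paper instead uses Lemma~\ref{Wei} as the \emph{source} of the $C^{0}$ bound: the weighted gradient inequality $(h^{2}+|\nabla h|^{2})/h^{\gamma}\le M R^{2-\gamma}$, combined with evenness via Guan's non-collapsing Lemma~\ref{COL}, gives $R/r\le C$, and together with the one-sided maximum-principle bounds you already noted this yields the full two-sided $C^{0}$ estimate (Lemma~\ref{Bou}). Note too that Lemma~\ref{Wei} by itself is not a $C^{1}$ bound until $R$ is controlled, so your ordering ``$C^{1}$ via Lemma~\ref{Wei}, then $C^{0}$ separately'' inverts the logical dependence. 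Your integral route would work for $k=n$---it is essentially how \cite{BF19} treats the $L_{p}$ dual Minkowski problem---but it is less self-contained here and obscures the paper's point that a single gradient estimate delivers $C^{0}$ and $C^{1}$ simultaneously across the whole range $1\le k\le n$.
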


Regarding the proof of Theorem \ref{coro2}, we note that after obtaining the $C^0$ estimate in Lemma \ref{Bou} for $k = n$ by establishing a gradient estimate, the rest of the proof follows the argument in the proof of \cite[Theorem 1.3]{HJ24}, which establishes the existence and uniqueness of solutions to the dual Minkowski problem for positive indices using degree theory. A key step of applying degree theory is that the linearized operator of \eqref{PQ2} at $h = 1$ is $\Delta + q - p$, where $\Delta$ is the Beltrami-Laplace operator on $\mathbb{S}^n$. This operator is invertible and has exactly one positive eigenvalue, $(q - p)$, with multiplicity 1, in the case $1 < p < q \leq n+1$.

The paper is organized as follows. In Section \ref{Sec2}, we present some basic properties related to convex bodies and $k$-th elementary functions. In Section \ref{Sec3}, we provide the proof of Theorem \ref{Thm1}.  Finally, in Section \ref{Sec4}, we derive a new uniqueness result for solutions to \eqref{PQ} with $f = 1$ and $p>1-k$.

\section{Preliminaries}
\label{Sec2}
In this section, we collect some basic facts about convex bodies and $k$-th elementary functions.
\subsection{Basics on convex bodies}

The study of convex bodies is extensively covered in many standard references, including the comprehensive books of  Gardner \cite{G06} and Schneider \cite{S14}.

Let ${\rnnn}$ standard for the $(n+1)$-dimensional Euclidean space.   For any two vectors $Y, Z\in {\rnnn}$, the standard inner product is expressed as $ \langle Y, Z\rangle $. The Euclidean norm of a vector $X\in{\rnnn}$ is $|X|=\sqrt{\langle X, X\rangle}$ and ${\sn}$ denotes the unit sphere.  A convex body is characterized as a compact convex set of ${\rnnn}$ with a non-empty interior.

For a convex body $K$, the support function in $\rnnn$ is defined for $x\in{\sn}$ as
\[
h_{K}(x)=\max\{\langle x, Y\rangle:Y \in K\}.
\]
Unless it leads to ambiguity, $h_{K}$ is shortened to $h$.

 The radial function $\rho_{K}$ of $K$ is denoted by
\begin{equation*}
\rho_{K}(u)=\max\{s>0: su\in K\}, \quad \forall u\in \sn.
\end{equation*}
Observe that $\rho_{K}(u)u\in \partial K$ for any $u\in \sn$. Similarly, $\rho_{K}$ is shortened to $\rho$ unless ambiguity arises.

For a convex body $K$ in $\rnnn$, the unit outer normal at $X\in \partial K$ is uniquely determined for $\mathcal{H}^{n}$ almost all points $X$. In such case,  the Gauss map $\nu_{K}$ assigns each $X\in \partial K$ to its unique unit outer normal.
For a subset $\omega\subset {\sn}$, the inverse Gauss map $\nu_{K}$ is described as
\begin{equation*}
\nu^{-1}_{K}(\omega)=\{X\in \partial K:  \nu_{K}(X) {\rm \ is \ defined \ and }\ \nu_{K}(X)\in \omega\}.
\end{equation*}

For a smooth and strictly convex body $K$, implying that its  boundary is of class $C^{\infty}$ and of positive Gauss curvature, the support function of $K$ can be written as
\begin{equation*}
h(x)=\langle x,\nu^{-1}_{K}(x)\rangle=\langle\nu_{K}(X),X\rangle,
\end{equation*}
where $x\in {\sn}, \ \nu_{K}(X)=x \ {\rm and} \ X\in \partial K$.

 Consider a local orthonormal basis $\{e_{1},e_{2},\ldots, e_{n}\}$ on $\sn$. Let $h_{i}$ and $h_{ij}$ represent the first and second-order covariant derivatives of $h$ on ${\sn}$ with respect to the frame, then the inverse Gauss map can be formulated as
\begin{equation*}
\nu^{-1}_{K}(x)=Dh(x)=\sum_{i} h_{i}e_{i}+h_{K}(x)x=\nabla h(x)+h(x)x,
\end{equation*}
where $D h$ is the Euclidean gradient on $\rnnn$. The connection between $u$ and $x$ is established by $\rho(u)u=\nabla h(x)+h(x)x=Dh(x)$.

Let  $\lambda=(\lambda_{1},\ldots, \lambda_{n})$ signify the principal radii of curvature at the point $X(x)\in \partial K$, which is the eigenvalues of matrix $\{h_{ij}+h\delta_{ij}\}$.  The Gauss curvature $\kappa$  of $\partial K$ is given by
\begin{equation*}
\kappa=\frac{1}{\sigma_{n}(h_{ij}+h\delta_{ij})}=\frac{1}{\det(h_{ij}+h\delta_{ij})}.
\end{equation*}

\subsection{$k$-th elementary symmetric function.} Let $P=\{a_{ij}\}$ be an $n\times n$ symmetric matrix,
\[
\sigma_{k}(P)=\sigma_{k}(\Lambda(P))=\sum_{1\leq i_{1}<i_{2}\ldots <i_{k}\leq n}\Lambda_{i_{1}}\Lambda_{i_{2}}\ldots \Lambda_{i_{k}},
\]
where $\Lambda:=\Lambda(P)=(\Lambda_{1},\ldots,\Lambda_{n})\in \mathbb{R}^{n}$ is the set of eigenvalues of $P$.
\begin{defi}
Let $1\leq k\leq n$ and $\Gamma_{k}$ be a cone in $\mathbb{R}^{n}$ defined as
\[
\Gamma_{k}=\{\Lambda \in \mathbb{R}^{n}:\sigma_{i}(\Lambda)>0, \ \forall 1\leq i \leq k\}.
\]
\end{defi}

\begin{defi}
A function $h\in C^{2}(\sn)$ of \eqref{PQ} is called $k$-admissible solution if
\[
b=\nabla^{2}h(x)+h(x)I\in \Gamma_{k}
\]
for all points $x\in \sn$. If $b\in \Gamma_{n}$, then $h$ is strictly (spherical) convex.
\end{defi}
We denote by $\sigma_{k}(\Lambda|i)$ the symmetric function with $\Lambda_{i}=0$.  Some basic properties of $k$-th elementary symmetric functions are given that we shall use in what follows.
\begin{prop}
Let $\Lambda=(\Lambda_{1},\cdots, \Lambda_{n})\in \mathbb{R}^{n}$ and $k=0,1,\cdots, n$. Then,
\begin{enumerate}
\item[(i)] $\sigma_{k+1}(\Lambda)=\sigma_{k+1}(\Lambda|i)+\lambda_{i}\sigma_{k}(\Lambda|i), \quad \forall 1\leq i\leq n$.
\item[(ii)] $\sum_{i=1}^{n}\Lambda_{i}\sigma_{k}(\Lambda|i)=(k+1)\sigma_{k+1}(\Lambda)$.

\item[(iii)]$\sum_{i=1}^{n}\sigma_{k}(\Lambda|i)=(n-k)\sigma_{k}(\Lambda)$.

\item[(iv)] $\sum^{n}_{i=1}\Lambda^{2}_{i}\sigma_{k-1}(\Lambda|i)=\sigma_{1}(\Lambda)\sigma_{k}(\Lambda)-(k+1)\sigma_{k+1}(\Lambda)$.

\end{enumerate}
\end{prop}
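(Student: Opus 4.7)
The plan is to verify these four standard identities by direct combinatorial manipulation of the defining sum
\[
\sigma_{m}(\Lambda)=\sum_{1\leq i_{1}<\cdots<i_{m}\leq n}\Lambda_{i_{1}}\cdots\Lambda_{i_{m}},
\]
so no substantive obstacle is expected; the point is only to record the identities in the form used later. I would treat (i) as the base identity and then derive (ii)--(iv) from it by simple counting or telescoping.

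For (i), I would split the index set of $\sigma_{k+1}(\Lambda)$ according to whether a fixed $i\in\{1,\ldots,n\}$ is among $\{i_{1},\ldots,i_{k+1}\}$. The terms that omit $i$ assemble exactly into $\sigma_{k+1}(\Lambda|i)$, while those that contain $i$ factor as $\Lambda_{i}$ times a product of $k$ of the remaining variables, summing to $\Lambda_{i}\sigma_{k}(\Lambda|i)$. This is the analogue of the elementary symmetric polynomial recursion $e_{m}(x_{1},\ldots,x_{n})=e_{m}(x_{1},\ldots,\widehat{x_{i}},\ldots,x_{n})+x_{i}\,e_{m-1}(x_{1},\ldots,\widehat{x_{i}},\ldots,x_{n})$.

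For (ii), I would use a double-counting argument: in the sum $\sum_{i=1}^{n}\Lambda_{i}\sigma_{k}(\Lambda|i)$, each monomial $\Lambda_{j_{1}}\cdots\Lambda_{j_{k+1}}$ of $\sigma_{k+1}(\Lambda)$ arises exactly $k+1$ times, once for each choice of a distinguished factor $\Lambda_{j_{\ell}}$ played by $\Lambda_{i}$. For (iii), the same double-counting principle applied to $\sum_{i=1}^{n}\sigma_{k}(\Lambda|i)$ shows that each monomial $\Lambda_{j_{1}}\cdots\Lambda_{j_{k}}$ of $\sigma_{k}(\Lambda)$ appears once for each of the $n-k$ indices $i\notin\{j_{1},\ldots,j_{k}\}$, yielding the factor $n-k$.

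Finally, for (iv), I would apply (i) with $k$ replaced by $k-1$, namely $\sigma_{k}(\Lambda)=\sigma_{k}(\Lambda|i)+\Lambda_{i}\sigma_{k-1}(\Lambda|i)$, multiply by $\Lambda_{i}$ and sum over $i$. The left-hand side becomes $\sigma_{1}(\Lambda)\sigma_{k}(\Lambda)$, the first term on the right is $(k+1)\sigma_{k+1}(\Lambda)$ by (ii), and rearranging isolates $\sum_{i=1}^{n}\Lambda_{i}^{2}\sigma_{k-1}(\Lambda|i)=\sigma_{1}(\Lambda)\sigma_{k}(\Lambda)-(k+1)\sigma_{k+1}(\Lambda)$, as claimed.
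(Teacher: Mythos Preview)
Your argument is correct in every part. Note, however, that the paper does not actually prove this proposition: it is stated in the preliminaries as a list of standard identities for elementary symmetric functions, with no argument supplied. Your write-up thus goes beyond what the paper provides, and the route you take (splitting by inclusion/exclusion of a fixed index for (i), double counting for (ii)--(iii), and combining (i) and (ii) for (iv)) is the canonical one.
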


\begin{prop}[Newton-Maclaurin inequality] For $\Lambda\in \Gamma_{k}$ and $1\leq l \leq k\leq n$, we have
\[
\left[\frac{\sigma_{k}(\Lambda)}{\binom{n}{k}}\right]^{\frac{1}{k}}\leq \left[\frac{\sigma_{l}(\Lambda)}{\binom{n}{l}}\right]^{\frac{1}{l}}.
\]
\end{prop}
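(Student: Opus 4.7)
The plan is to reduce the Newton--Maclaurin inequality to Newton's inequality, which states that for $p_{j} := \sigma_{j}(\Lambda)/\binom{n}{j}$ and any $\Lambda\in\mathbb{R}^{n}$,
\[
p_{j}^{2} \ \geq \ p_{j-1}\,p_{j+1}, \qquad 1\leq j\leq n-1.
\]
Once Newton's inequality is in hand, the Newton--Maclaurin inequality follows by a simple telescoping argument on $\Gamma_{k}$. So the work consists of two essentially independent steps: (a) prove Newton's inequality; (b) deduce the geometric-mean statement from it using only the sign information provided by $\Lambda\in\Gamma_{k}$.

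For step (a), I would argue via Rolle's theorem applied to the polynomial
\[
P(t) \ = \ \prod_{i=1}^{n}(t+\Lambda_{i}) \ = \ \sum_{j=0}^{n}\binom{n}{j}\,p_{j}\,t^{\,n-j},
\]
whose roots $-\Lambda_{1},\ldots,-\Lambda_{n}$ are all real. Rolle's theorem implies that every derivative of $P$ has only real roots; after differentiating $n-j-1$ times and dividing by a suitable constant one obtains a real-rooted quadratic whose coefficients are exactly $p_{j-1}$, $p_{j}$, and $p_{j+1}$. Its discriminant being nonnegative is precisely Newton's inequality. This is the classical polynomial-theoretic argument and works for all real $\Lambda$, with no sign assumption.

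For step (b), fix $\Lambda\in\Gamma_{k}$, so that $p_{j}>0$ for $1\leq j\leq k$. Newton's inequality rearranges to
\[
\frac{p_{j+1}}{p_{j}} \ \leq \ \frac{p_{j}}{p_{j-1}},\qquad 1\leq j\leq k-1,
\]
i.e.\ the ratios $r_{j}:=p_{j}/p_{j-1}$ (with $p_{0}=1$) form a non-increasing positive sequence on $1\leq j\leq k$. Since $p_{k}=r_{1}r_{2}\cdots r_{k}$ and $p_{l}=r_{1}r_{2}\cdots r_{l}$, the quantities $p_{k}^{1/k}$ and $p_{l}^{1/l}$ are geometric means of the first $k$ and first $l$ terms of this decreasing sequence, respectively. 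For $l\leq k$, throwing in the additional (smaller) terms can only decrease the geometric mean, yielding
\[
p_{k}^{1/k} \ \leq \ p_{l}^{1/l},
\]
which is the claim.

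The main conceptual obstacle, and the only place where the cone condition is actually used, is step (b): Newton's inequality alone does not imply any monotonicity of $p_{j}^{1/j}$ unless the relevant $p_{j}$'s are positive, and positivity of $p_{1},\ldots,p_{k}$ is exactly the defining property of $\Gamma_{k}$. A careful write-up must therefore verify that the telescoping identity $p_{k}=\prod_{j\leq k}r_{j}$ does not require $\Lambda_{i}\geq 0$ componentwise—only that the partial products remain positive, which is guaranteed on $\Gamma_{k}$. Everything else is routine.
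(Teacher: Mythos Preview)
The paper does not supply a proof of this proposition; it is recorded in the Preliminaries as a classical fact and used later as a black box. Your argument is the standard one and is correct, with one imprecision in step~(a): differentiating $P$ exactly $n-j-1$ times leaves a polynomial of degree $j+1$, not a quadratic. The usual fix is to first differentiate $j-1$ times (yielding degree $n-j+1$), then reverse via $Q(t)\mapsto t^{\,n-j+1}Q(1/t)$, and finally differentiate $n-j-1$ more times; the resulting real-rooted quadratic has coefficients proportional to $p_{j-1},\,p_j,\,p_{j+1}$, and its nonnegative discriminant is Newton's inequality. Step~(b) is clean, and your remark that the cone hypothesis $\Lambda\in\Gamma_k$ enters only to guarantee $p_1,\dots,p_k>0$---so that the ratios $r_j$ and the telescoping geometric-mean comparison make sense---is exactly the point.
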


\section{Existence of solutions}
\label{Sec3}
To tackle the $L_p$ dual Christoffel-Minkowski problem, a key step is obtaining a priori estimates. Regarding the $C^0$ estimate, where $p \geq q$, the maximum principle straightforwardly provides upper and lower bounds for the solutions, as detailed in \cite{CTX25}. However, when $p < q$, the $C^0$ estimate relies on a gradient estimate.

Let $R = \max_{\mathbb{S}^n} h$ and $r = \min_{\mathbb{S}^n} h$. Inspired by \cite[Lemma 2.2]{HI24}, a gradient estimate for solutions to \eqref{PQ} can be derived as follows.

\begin{lem}\label{Wei}
Let $1\leq k\leq n$. Let $1<p\leq k+1$ and $q\leq k+1$.  Suppose $h$ is a positive, smooth and strictly convex solution to Eq. \eqref{PQ}. For any $0<\gamma<2(p-1)/k\leq 2$. Then there exists a constant $M\geq 2$, depending on $\gamma, k, \min_{\sn} f$ and $||f||_{C^{1}(\sn)}$ such that
\begin{equation}\label{mia}
\frac{h^{2}+|\nabla h|^{2}}{h^{\gamma}}\leq M R^{2-\gamma}.
\end{equation}
\end{lem}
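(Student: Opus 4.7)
The plan is to apply the maximum principle to the auxiliary function $w = \phi/h^\gamma$, where $\phi = h^2 + |\nabla h|^2$. Let $x_0 \in \sn$ be a maximum point of $w$. If $\nabla h(x_0) = 0$, then $w(x_0) = h(x_0)^{2-\gamma} \leq R^{2-\gamma}$ (using $\gamma < 2$) and the bound is immediate; hence I assume $\nabla h(x_0) \neq 0$ and choose a local orthonormal frame at $x_0$ with $e_1 = \nabla h / |\nabla h|$. Using the identity $\nabla_i \phi = 2 b_{ij} h_j$ (where $b_{ij} = h_{ij} + h\delta_{ij}$), the first-order condition $\nabla \log w(x_0) = 0$ forces $b_{11} = \gamma \phi / (2h)$ and $b_{1j} = 0$ for $j \geq 2$. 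Diagonalizing the remaining block, $b$ has eigenvalues $\Lambda_1 = \gamma\phi/(2h)$ and $\Lambda_2,\ldots,\Lambda_n > 0$ at $x_0$.

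Next I would apply the linearized operator $L = F^{ij} \nabla_i \nabla_j$ (with $F = \sigma_k(b)$ and $F^{ij} = \partial F/\partial b_{ij}$) to $\log w$ and exploit $L(\log w)(x_0) \leq 0$. The Codazzi-type symmetry $\nabla_m b_{ij} = \nabla_i b_{mj}$ on $\sn$ (the Ricci commutators for $h_{ijm}$ are exactly cancelled by the derivatives of the $h\delta_{ij}$ term in $b$) gives
\begin{equation*}
L(\phi) = 2[\sigma_1 F - (k+1)\sigma_{k+1}] - 2khF + 2\langle \nabla F, \nabla h \rangle, \qquad L(h) = kF - (n-k+1)h\sigma_{k-1}.
\end{equation*}
Differentiating \eqref{PQ} and substituting $\langle b \nabla h, \nabla h \rangle(x_0) = \Lambda_1 |\nabla h|^2 = \gamma\phi |\nabla h|^2/(2h)$ produces
\begin{equation*}
2\langle \nabla F, \nabla h \rangle = 2 \langle \nabla f, \nabla h \rangle h^{p-1} \phi^{(k+1-q)/2} + [2(p-1) + (k+1-q)\gamma]\, F |\nabla h|^2 / h.
\end{equation*}
Feeding these into the inequality $L(\phi) \leq \gamma \phi L(h)/h + \gamma(\gamma-1)\phi F^{11} |\nabla h|^2/h^2$, using $\phi/h = h + |\nabla h|^2/h$ on the right, and absorbing the negative term $-\gamma(\gamma-1)\phi F^{11}|\nabla h|^2/h^2$ into $2\Lambda_1^2 F^{11}$ via the identity
\begin{equation*}
2\Lambda_1^2 F^{11} - \gamma(\gamma-1)\phi F^{11}|\nabla h|^2/h^2 = \frac{\gamma \phi F^{11}}{2h^2}\bigl[\gamma h^2 + (2-\gamma) |\nabla h|^2 \bigr] \geq 0
\end{equation*}
(valid since $\gamma < 2$), I would arrive at the key inequality
\begin{equation*}
c_1\, F |\nabla h|^2 / h \;\leq\; (2+\gamma) k h F + 2 \|\nabla f\|_{C^0(\sn)} |\nabla h|\, h^{p-1} \phi^{(k+1-q)/2},
\end{equation*}
where $c_1 := 2(p-1) - \gamma(q-1)$. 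The hypotheses $\gamma < 2(p-1)/k$ and $q - 1 \leq k$ yield $\gamma(q-1) \leq \gamma k < 2(p-1)$, hence $c_1 > 0$; this structural positivity is the whole point of the range $\gamma \in (0, 2(p-1)/k)$.

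Substituting $F = f h^{p-1} \phi^{(k+1-q)/2}$ and dividing by $f h^{p-2} \phi^{(k+1-q)/2} > 0$ would reduce the preceding inequality to a quadratic in $|\nabla h|$:
\begin{equation*}
c_1 |\nabla h|^2 \;\leq\; (2+\gamma) k h^2 + \frac{2 \|\nabla f\|_{C^0(\sn)}}{\min_{\sn} f}\, h\, |\nabla h|,
\end{equation*}
which forces $|\nabla h|(x_0) \leq C h(x_0)$ for some constant $C = C(\gamma, k, \min_{\sn} f, \|f\|_{C^1(\sn)})$. Therefore $\phi(x_0) \leq (1+C^2) h(x_0)^2$, and since $\gamma < 2$ and $h \leq R$,
\begin{equation*}
w(x_0) \leq (1+C^2)\, h(x_0)^{2-\gamma} \leq (1+C^2)\, R^{2-\gamma},
\end{equation*}
which gives \eqref{mia} with $M := \max\{2, 1+C^2\}$.

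The hard part will be carrying through the computation of $L(\phi)$ and isolating precisely the coefficient $c_1 = 2(p-1) - \gamma(q-1)$. This coefficient emerges from the interplay between the $(k+1-q)\gamma$ contribution arising from differentiating the factor $\phi^{(k+1-q)/2}$ in \eqref{PQ} (after substituting the eigenvalue relation $b \nabla h = (\gamma\phi/(2h)) \nabla h$ at $x_0$) and the $-\gamma k$ contribution produced by $\gamma \phi L(h)/h$ through the split $\phi/h = h + |\nabla h|^2/h$. Once $c_1 > 0$ is recognized, all remaining $F^{11}$-terms are handled via the identity above (using $\gamma < 2$), and the quadratic estimate on $|\nabla h|$ is routine.
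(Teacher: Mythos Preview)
Your proposal is correct and follows essentially the same maximum-principle argument as the paper, with the same auxiliary function $\rho^2/h^\gamma$, the same first-order analysis yielding $b_{11}=\gamma\rho^2/(2h)$, and the same use of $\sigma_k^{ii}(\log w)_{;ii}\le 0$; the only differences are that the paper argues by contradiction (assuming $\max w>MR^{2-\gamma}$ and reaching a contradiction for large $M$) whereas you extract the quadratic bound $|\nabla h|\le Ch$ directly, and the paper drops the nonnegative term coming from $\nabla(\rho^{k+1-q})$ (using $q\le k+1$) to obtain the coefficient $p-1-\tfrac{k\gamma}{2}$, while you keep it and obtain the slightly sharper $c_1=2(p-1)-(q-1)\gamma$. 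Both routes are equivalent and your packaging is a bit cleaner.
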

\begin{proof}
 Recall $\rho^{2}=h^{2}+|\nabla h|^{2}$. Let $b[h]:=\nabla^{2}h+hI$. We define the function $\zeta=\frac{\rho^{2}}{h^{\gamma}}$ for $0< \gamma <2(p-1)/k$. Set $b=b[h]$. If \eqref{mia} is not true, we may assume $\max_{\sn} \zeta> M R^{2-\gamma}$. Suppose $\zeta$ attains its maximum at a point $x_{0}\in\sn$, then at $x_{0}$, we obtain
\begin{equation}\label{ji}
(\nabla^{2}h+hI)\nabla h=\frac{\gamma}{2}\frac{\rho^{2}}{h}\nabla h.
\end{equation}
 Since $\nabla h \neq 0$ at $x_{0}$, \eqref{ji} implies that at $x_{0}$, $\nabla h$ is an eigenvector of $b$. Hence we may choose an orthonormal basis $\{e_{i}\}$ for $T_{x_{0}}(\sn)$, such that $e_{1}=\frac{\nabla h}{|\nabla h|}$, $b|_{x_{0}}$ is diagonal and $b_{1i}=0$ for $i=2,\ldots, n$, where $T_{x_{0}}(\sn)$ is the tangent space of $\sn$ at $x_{0}$. Together with \eqref{ji}, we have
\begin{equation}\label{aa}
b_{11}=\frac{\gamma}{2}\frac{\rho^{2}}{h}.
\end{equation}
On the other hand, at $x_{0}$, by a direct calculation, we obtain
\begin{equation*}
\begin{split}
\zeta_{;ii}&=\frac{2}{h^{\gamma}}(\sum_{\ell}b_{\ell i;i}h_{\ell}+b^{2}_{ii}-hb_{ii})-4\gamma\frac{\sum_{\ell}b_{\ell i}h_{\ell}h_{i}}{h^{\gamma+1}} -\frac{\gamma \rho^{2}(b_{ii}-h\delta_{ii})}{h^{\gamma+1}}+\gamma(\gamma+1)\frac{\rho^{2}h^{2}_{i}}{h^{\gamma+2}},
\end{split}
\end{equation*}
and
\begin{equation}
\begin{split}
\label{rt}
\frac{\zeta_{;ii}}{\zeta}&=\frac{2}{\rho^{2}}(\sum_{\ell}b_{\ell i;i}h_{\ell}+b^{2}_{ii}-hb_{ii})-4\gamma \frac{b_{ii}h^{2}_{i}}{h\rho^{2}} - \frac{\gamma(b_{ii}-h\delta_{ii})}{h}+\gamma(\gamma+1)\frac{h^{2}_{i}}{h^{2}}.
\end{split}
\end{equation}
Let  $\sigma^{ii}_{k}=\frac{\partial \sigma_{k}}{\partial \lambda_{i}}$. Since  $\zeta_{;ii}\leq 0$ and $\zeta_{i}=0$ for $i=1,\ldots,n$ at $x_{0}$, using \eqref{aa} and \eqref{rt}, we obtain
\begin{equation*}
\begin{split}
0&\geq \frac{2}{\rho^{2}}\sum_{i}\sigma^{ii}_{k}(\sum_{\ell}b_{\ell i;i}h_{\ell}+b^{2}_{ii}-hb_{ii})-4\gamma \sigma^{11}_{k}\frac{b_{11}h^{2}_{1}}{h\rho^{2}}\\
&\quad -\gamma\sum_{i} \sigma^{ii}_{k}\frac{b_{ii}}{h}+\gamma \sum_{i} \sigma^{ii}_{k}+\gamma(\gamma+1)\sigma^{11}_{k}\frac{h^{2}_{1}}{h^{2}}\\
&=\frac{2}{\rho^{2}}\left((h^{p-1}f\rho^{k+1-q})_{1}h_{1}+\sum_{i}\sigma^{ii}_{k}b^{2}_{ii}-kh^{p}f\rho^{k+1-q}\right)-2 \gamma^{2}\sigma^{11}_{k}\frac{h^{2}_{1}}{h^{2}}\\
&\quad -\gamma k h^{p-2}f\rho^{k+1-q}+\gamma \sum_{i} \sigma^{ii}_{k}+\gamma(\gamma+1)\sigma^{11}_{k}\frac{h^{2}_{1}}{h^{2}}.
\end{split}
\end{equation*}
Due to $\rho \rho_{1}=h_{1}b_{11}$ and $\frac{2}{\rho^{2}}\sigma^{11}_{k}b^{2}_{11}=\frac{\rho^{2}\sigma^{11}_{k}\gamma^{2}}{2h^{2}}$ at $x_{0}$, then for $p>1$ and $q\leq k+1$, we have
\begin{equation}
\begin{split}
\label{q3}
0&\geq 2(p-1)\frac{h^{p-2}h^{2}_{1}f\rho^{k+1-q}}{\rho^{2}}+2\frac{h^{p-1}h_{1}f_{1}\rho^{k+1-q}}{\rho^{2}}+2(k+1-q)\frac{h^{p-1}f\rho^{k-q}\rho_{1}h_{1}}{\rho^{2}}\\
&\quad +\frac{2}{\rho^{2}}\sum_{i>1}\sigma^{ii}_{k}b^{2}_{ii}-2k\frac{h^{p}f\rho^{k+1-q}}{\rho^{2}}+\left( \frac{\gamma^{2}\rho^{2}}{2h^{2}_{1}}-\gamma(\gamma-1) \right)\sigma^{11}_{k}\frac{h^{2}_{1}}{h^{2}}-\gamma k h^{p-2}f\rho^{k+1-q}+\gamma \sum_{i}\sigma^{ii}_{k}\\
&\geq 2(p-1)\frac{h^{p-2}h^{2}_{1}f\rho^{k+1-q}}{\rho^{2}}+2\frac{h^{p-1}h_{1}f_{1}\rho^{k+1-q}}{\rho^{2}}-2k\frac{h^{p}f\rho^{k+1-q}}{\rho^{2}}\\
&\quad +\left( \frac{\gamma^{2}\rho^{2}}{2h^{2}_{1}}-\gamma(\gamma-1) \right)\sigma^{11}_{k}\frac{h^{2}_{1}}{h^{2}}-\gamma k h^{p-2}f\rho^{k+1-q}.
\end{split}
\end{equation}
For $0<\gamma \leq 2$, there holds
\begin{equation}\label{q2}
 \frac{\gamma^{2}\rho^{2}}{2h^{2}_{1}}-\gamma(\gamma-1)\geq \frac{\gamma^{2}}{2}-\gamma(\gamma-1)=\gamma\left(1-\frac{\gamma}{2}  \right)\geq 0.
\end{equation}
Moreover, by the assumption, when for some $M \geq 2$, $\frac{\rho^{2}}{h^{\gamma}}\big|_{x_{0}}>M R^{2-\gamma}$, then at $x_{0}$, we find
\begin{equation}\label{q1}
h^{2}_{1}\geq M R^{2-\gamma}h^{\gamma}-h^{2}\geq \frac{M}{2}R^{2-\gamma}h^{\gamma}.
\end{equation}
Substituting \eqref{q1} and \eqref{q2} into \eqref{q3}, for $0<\gamma<2(p-1)/k$, at $x_{0}$, we obtain
\begin{equation}
\begin{split}
\label{q4}
0&\geq \frac{2h^{p-2}f\rho^{k+1-q}}{\rho^{2}}\left( (p-1)h^{2}_{1}+hh_{1}(\log f)_{1}-kh^{2}-\frac{k\gamma}{2}(h^{2}+h^{2}_{1})\right)\\
&=\frac{2h^{p-2}f\rho^{k+1-q}}{\rho^{2}}\left(\left(p-1-\frac{k\gamma}{2} \right)h^{2}_{1}+hh_{1}(\log f)_{1}-\left(k+\frac{k\gamma}{2} \right)h^{2} \right)\\
&\geq \frac{2h^{p-2}f\rho^{k+1-q}}{\rho^{2}}\left(\left(p-1-\frac{k\gamma}{2} \right)\frac{M}{2}R^{2-\gamma}h^{\gamma}-c_{1}M^{\frac{1}{2}}R^{1-\frac{\gamma}{2}}h^{1+\frac{\gamma}{2}}-c_{2}h^{2}\right)\\
&=\frac{2h^{p+\gamma-2}\rho^{k+1-q}f}{\rho^{2}}\left( \left( p-1-\frac{k\gamma}{2}\right)\frac{M}{2}R^{2-\gamma}-c_{1}M^{\frac{1}{2}} R^{1-\frac{\gamma}{2}}h^{1-\frac{\gamma}{2}}-c_{2}h^{2-\gamma}\right)\\
&\geq \frac{2h^{p+\gamma-2}\rho^{k+1-q}R^{2-\gamma}f}{\rho^{2}}\left(\left( p-1-\frac{k\gamma}{2} \right)\frac{M}{2}-c_{1}M^{\frac{1}{2}}-c_{2} \right),
\end{split}
\end{equation}
 where the positive constant $c_{1}$ depends on $\min_{\sn}f$ and $||f||_{C^{1}(\sn)}$,  and the positive constant $c_{2}$ depends on $k$. However, when $M$ is sufficiently large, which contracts to \eqref{q4}. The proof is completed.
\end{proof}

The following lemma due to Guan \cite[Lemma 3.1]{Gu23} (see also Mei-Wang-Weng \cite[Lemma 4.2]{MWW24}) plays an important role in obtaining the uniform $C^{0}$ and $C^{1}$ estimates of solutions to \eqref{PQ}.
\begin{lem}\label{COL}
If $h$ is an even, smooth and strictly convex function on $\sn$ satisfying
\begin{equation*}
\frac{|\nabla h|^{2}}{h^{\gamma}}\leq M R^{2-\gamma}, \quad \ {\rm on} \ \ \sn
\end{equation*}
for some $M>0$, $\gamma>0$. Then the following non-collapsing estimate holds
\begin{equation*}
\frac{R}{r}\leq C,
\end{equation*}
where the constant $C$ depends only on $n$, $M$ and $\gamma$.
\end{lem}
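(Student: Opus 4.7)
The plan is to normalize $R = 1$ by scaling and prove $h \geq c(n, M, \gamma) > 0$ on $\sn$. Three ingredients must be combined: (a) the second-order property of a support function of a strictly convex body, namely $\nabla^{2} h + h I > 0$ on $\sn$, which along any great-circle geodesic $\gamma$ gives $\phi'' + \phi > 0$ for $\phi := h \circ \gamma$; (b) the evenness $h(x) = h(-x)$; and (c) the substitution $\phi_{\gamma} := h^{1-\gamma/2}$ (or $\log h$ when $\gamma = 2$, or $-h^{1-\gamma/2}$ when $\gamma > 2$), which converts the hypothesis $|\nabla h|^{2} \leq M R^{2-\gamma} h^{\gamma}$ into a uniform Lipschitz estimate $|\nabla \phi_{\gamma}| \leq C(\gamma, M) R^{1-\gamma/2}$ on $\sn$.

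For $\gamma \geq 2$ the naive route works. Taking $\psi = \log h$ (when $\gamma = 2$) or $\psi = -h^{1-\gamma/2}$ (when $\gamma > 2$), one has $|\nabla \psi| \leq C(M,\gamma)$ uniformly in $R$ because the sign of $1-\gamma/2$ makes the $R$-dependence favorable. Since $h$ attains its maximum at some $x_{M}$, and hence by evenness also at $-x_{M}$, every $x \in \sn$ lies within geodesic distance $\pi/2$ of one of $\pm x_{M}$. Integrating the Lipschitz estimate along the shorter geodesic and inverting the substitution yields $R/r \leq C(M,\gamma)$ directly.

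The genuine difficulty is $\gamma < 2$ with $M$ large, where the Lipschitz bound $(1-\gamma/2)\sqrt{M}\,R^{1-\gamma/2}$ on $h^{1-\gamma/2}$ overwhelms its total variation and the argument above collapses. Here I would invoke (a). Fix $x_{M}$ with $h(x_{M}) = 1$ and let $\gamma(t)$ be a great-circle geodesic from $x_{M}$ to $-x_{M}$. Then $\phi := h \circ \gamma$ satisfies $\phi(0) = \phi(\pi) = 1$, $\phi'(0) = \phi'(\pi) = 0$, and $\phi'' + \phi \geq 0$. The Wronskian $W(t) := \phi'(t)\sin t - \phi(t)\cos t$ obeys $W'(t) = (\phi''+\phi)\sin t \geq 0$ on $[0,\pi]$. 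Applied to $w := \phi - \cos$, the identity $(w/\sin t)' = W/\sin^{2} t$ together with $\lim_{t \to 0^+} w/\sin t = 0$ forces $\phi(t) \geq \cos t$ on $[0, \pi]$. The symmetric argument from $-x_{M}$, using $h(-x_{M}) = 1$, gives $\phi(t) \geq -\cos t$ as well, so $\phi(t) \geq |\cos t|$ on $[0, \pi]$.

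The only remaining issue is the degeneracy at $t = \pi/2$, which I would repair by bridging with the Lipschitz bound. For $\delta \in (0, \pi/2)$ one has $\phi(\pi/2 - \delta) \geq \sin \delta$, and $|\nabla h^{1-\gamma/2}| \leq (1-\gamma/2)\sqrt{M}$ along $\gamma$ gives
\[
\phi(t)^{1-\gamma/2} \geq (\sin\delta)^{1-\gamma/2} - 2(1-\gamma/2)\sqrt{M}\,\delta \qquad \text{for } t \in [\pi/2-\delta,\, \pi/2+\delta].
\]
Choosing $\delta$ of order $M^{-1/\gamma}$ balances the two terms, yielding $\phi(t) \geq c(\gamma)\,M^{-1/\gamma}$ on the equatorial strip; outside the strip the Sturm bound already gives $\phi \geq \sin\delta \sim M^{-1/\gamma}$. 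Since every $x \in \sn \setminus \{\pm x_{M}\}$ sits on some such geodesic, we conclude $r \geq c\, M^{-1/\gamma}$, i.e.\ $R/r \leq C(\gamma)\, M^{1/\gamma}$. The principal obstacle is precisely this $\gamma < 2$ case: evenness plus the gradient bound alone cannot yield a positive lower bound when $\sqrt{M}$ is large, and the second-order support-function inequality $\nabla^{2} h + hI \geq 0$ must be used together with the gradient estimate to close the argument.
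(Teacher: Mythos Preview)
The paper does not prove this lemma; it quotes it from Guan \cite[Lemma~3.1]{Gu23} and Mei--Wang--Weng \cite[Lemma~4.2]{MWW24}. Your argument is correct and follows the same strategy as those references: spherical convexity along great circles through $\pm x_{M}$ yields the comparison $h\geq R|\cos t|$, and the gradient hypothesis (via $h\mapsto h^{1-\gamma/2}$) bridges the degeneracy at the equator.

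Two small points. First, the displayed identity $(w/\sin t)'=W/\sin^{2}t$ is off by a constant: with $w=\phi-\cos t$ one computes $w'\sin t-w\cos t=W+1$, so in fact $(w/\sin t)'=(W+1)/\sin^{2}t$; since $W(0)=-1$ and $W'\geq 0$ on $[0,\pi]$ the conclusion $w\geq 0$ is unaffected. Second, the Sturm bound $\phi(t)\geq R|\cos t|$ admits a one-line geometric derivation that bypasses the ODE: at the maximum $\nabla h(x_{M})=0$, so $Dh(x_{M})=Rx_{M}\in\partial K$, and by origin symmetry and convexity the segment $[-Rx_{M},Rx_{M}]$ lies in $K$, whence $h(x)\geq R|\langle x,x_{M}\rangle|$ for every $x\in\sn$. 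Your bridging choice $\delta\sim M^{-1/\gamma}$, yielding $R/r\leq C(\gamma)M^{1/\gamma}$ when $0<\gamma<2$, is the correct optimization.
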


Utilizing Lemma \ref{Wei} and Lemma \ref{COL}, we can derive the uniform estimates for the solution $h$ to \eqref{PQ} as below.
\begin{lem}\label{Bou}
Let $1\leq k \leq n$ and $1<p<q\leq k+1$. Suppose $h$ is an even, smooth and strictly convex solution to Eq. \eqref{PQ}. Then we have
\begin{equation}\label{C0}
1/C\leq h \leq C,
\end{equation}
and
\begin{equation}\label{C1}
 \quad |\nabla h|\leq C,
\end{equation}
where $C>0$ is a constant depending only on $n,\gamma,k, \min_{\sn} f$ and $||f||_{C^{2}(\sn)}$.
\end{lem}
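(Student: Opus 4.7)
The plan is to combine the gradient estimate of Lemma \ref{Wei} and the non-collapsing estimate of Lemma \ref{COL} with the maximum principle applied to \eqref{PQ} at the extrema of $h$. Fix any $\gamma\in(0,2(p-1)/k)$, which is possible because $p>1$. Then Lemma \ref{Wei} gives
\[
\frac{|\nabla h|^{2}}{h^{\gamma}}\leq\frac{h^{2}+|\nabla h|^{2}}{h^{\gamma}}\leq MR^{2-\gamma}\quad\text{on }\sn,
\]
so the hypothesis of Lemma \ref{COL} is satisfied, and since $h$ is even that lemma yields the non-collapsing bound $R\leq Cr$.

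Next, I would extract one-sided bounds on $R$ and $r$ by evaluating \eqref{PQ} pointwise at the maximum $x_{\max}$ and the minimum $x_{\min}$ of $h$. At $x_{\max}$ one has $\nabla h=0$ and $\nabla^{2}h\leq 0$, so every eigenvalue of $\nabla^{2}h+hI$ lies in $(0,R]$ (positive by strict convexity), and thus $\sigma_{k}(\nabla^{2}h+hI)(x_{\max})\leq\binom{n}{k}R^{k}$. The right-hand side of \eqref{PQ} at that point equals $f(x_{\max})R^{p+k-q}$, and comparing produces
\[
R^{q-p}\geq \frac{f(x_{\max})}{\binom{n}{k}}\geq\frac{\min_{\sn}f}{\binom{n}{k}}>0.
\]
Symmetrically, at $x_{\min}$ the eigenvalues are at least $r$, so $\sigma_{k}(\nabla^{2}h+hI)(x_{\min})\geq\binom{n}{k}r^{k}$; comparing with $f(x_{\min})r^{p+k-q}$ gives $r^{q-p}\leq(\max_{\sn}f)/\binom{n}{k}$. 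Because $q-p>0$, these two comparisons are genuine scale bounds: $R\geq c_{0}$ and $r\leq C_{0}$.

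Combining $R\leq Cr$ with $r\leq C_{0}$ yields an upper bound on $R$, while $r\geq R/C$ together with $R\geq c_{0}$ yields a lower bound on $r$, establishing \eqref{C0}. Substituting the $C^{0}$ bound back into Lemma \ref{Wei} then gives $|\nabla h|^{2}\leq MR^{2-\gamma}h^{\gamma}\leq C'$, which is \eqref{C1}. The only subtle point is that the strict inequality $p<q$ is essential: in the scale-invariant case $p=q$, the maximum-principle step at $x_{\max}$ and $x_{\min}$ produces no information on $R$ or $r$ individually, and the non-collapsing estimate alone would then be insufficient to close the $C^{0}$ bound.
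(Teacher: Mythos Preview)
Your argument is correct and follows exactly the paper's approach: evaluating \eqref{PQ} at the extrema of $h$ to obtain $R^{q-p}\gtrsim\min f$ and $r^{q-p}\lesssim\max f$, then combining these one-sided bounds with Lemma~\ref{COL} (fed by Lemma~\ref{Wei}) to close the $C^{0}$ estimate, and reading off \eqref{C1} from Lemma~\ref{Wei}. Your write-up is in fact more explicit than the paper's about where the inequalities~\eqref{oi} come from, but the route is identical.
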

\begin{proof}
From \eqref{PQ}, we obtain the following inequalities:
\begin{equation}\label{oi}
R^{q-p} \geq c_{n,k} \min_{\mathbb{S}^n} f, \quad r^{q-p} \leq c_{n,k} \max_{\mathbb{S}^n} f,
\end{equation}
where the positive constant $c_{n,k}$ depends only on $n$ and $k$. If $p < q$, from \eqref{oi}, we can derive a uniform upper bound for $r$ and a uniform lower bound for $R$. These facts, together with Lemmas \ref{Wei} and \ref{COL}, yield \eqref{C0}. Equation \eqref{C1} follows directly from \eqref{C0} and Lemma \ref{Wei}.
\end{proof}

The following full rank theorem ensures the strict convexity of solutions to \eqref{PQ} when $1\leq k<n$.
\begin{theo}\label{Sph}
Let $1\leq k<n$. Let $p\geq 1$ and $q\in \rn$. Assume that $h$ is a positive and smooth  solution of Eq. \eqref{PQ} with $\nabla^{2}h+hI\geq 0 $ on $\sn$.  Let $f$ be a positive and smooth function satisfying the condition \eqref{f1}.
Then $\nabla^{2}h+hI$ is positive definite on $\sn$.
\end{theo}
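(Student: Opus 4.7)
The plan is to apply the microscopic convexity principle (constant rank theorem) in the spirit of Caffarelli-Guan-Ma and Guan-Xia, whose framework for the $L_{p}$ Christoffel-Minkowski problem (the case $q=k+1$) is closest to the present setting. Throughout, write $W := \nabla^{2}h + hI$, $\rho^{2} := h^{2}+|\nabla h|^{2}$, and $\psi := f^{-1/(k+p-1)}$, so that equation \eqref{PQ} takes the equivalent multiplicative form
\begin{equation*}
\psi \cdot [\sigma_{k}(W)]^{1/(k+p-1)} \;=\; h^{(p-1)/(k+p-1)} \cdot \rho^{(k+1-q)/(k+p-1)},
\end{equation*}
in which both exponents on $h$ and $\rho$ are nonnegative since $p\geq 1$ and $q\leq k+1$, and the assumption \eqref{f1} reads exactly as $\nabla^{2}\psi + \psi I \geq 0$ on $\sn$.

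Arguing by contradiction, let $\ell := \min_{x\in \sn}\operatorname{rank} W(x)$ and suppose $\ell<n$. Fix $x_{0}$ where this minimum is attained, and choose a local orthonormal frame so that $W(x_{0})$ is diagonal with eigenvalues $\lambda_{1}\geq \cdots \geq \lambda_{\ell}>0 =\lambda_{\ell+1}=\cdots=\lambda_{n}$. Partition the indices into a good set $G=\{1,\ldots,\ell\}$ and a bad set $B=\{\ell+1,\ldots,n\}$, and introduce the standard Bian-Guan type test function
\begin{equation*}
\varphi(x) \;:=\; \sigma_{\ell+1}(W)(x) \;+\; \frac{\sigma_{\ell+2}(W)(x)}{\sigma_{\ell+1}(W)(x)},
\end{equation*}
regularized in the usual way when $\sigma_{\ell+1}(W)$ vanishes. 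Then $\varphi\geq 0$ near $x_{0}$, with $\varphi(x_{0})=0$ precisely because the rank of $W$ is $\leq \ell$ there.

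The heart of the proof is to establish the differential inequality
\begin{equation*}
F^{ij}\varphi_{ij} \;\leq\; C\bigl(\varphi + |\nabla \varphi|\bigr)
\end{equation*}
in a neighborhood of $x_{0}$, where $F^{ij}:=\partial \sigma_{k}(W)/\partial W_{ij}$ denotes the linearized operator. The way to obtain it is to take two covariant derivatives of the logarithmic form of the rewritten equation above along a bad direction $e_{\alpha}$, $\alpha\in B$, and substitute into the second-order expansion of $\varphi$; the resulting identity expresses $\sum_{i,j}F^{ij}W_{ij;\alpha\alpha}$ as a sum of (i) first-derivative terms controlled by $|\nabla \varphi|$, (ii) a nonnegative contribution $(\psi_{\alpha\alpha}+\psi)/\psi$ provided by hypothesis \eqref{f1}, (iii) a nonnegative contribution from the $h$-factor because the exponent $(p-1)/(k+p-1)$ is nonnegative and $h>0$, and (iv) a new contribution coming from $\rho^{(k+1-q)/(k+p-1)}$, whose nonnegativity in bad directions follows from the fact that $(\rho^{2})_{\alpha\alpha}=2\sum_{\ell}h_{\ell}h_{\ell\alpha\alpha}+2\sum_{\ell}h_{\ell\alpha}^{2}+2hh_{\alpha\alpha}+2h_{\alpha}^{2}$ combines with $\lambda_{\alpha}=0$ to give the right sign after using the equation. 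The cubic third-derivative error terms are then absorbed by Cauchy-Schwarz against $F^{ij}$, exploiting that $\lambda_{i}>0$ for $i\in G$. Once the inequality is in hand, the strong minimum principle forces $\varphi\equiv 0$ in a neighborhood of $x_{0}$; standard propagation across $\sn$ then yields $\operatorname{rank} W\equiv \ell$ globally.

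Finally, constant rank $\ell<n$ on $\sn$ means that the kernel of $W$ defines an $(n-\ell)$-dimensional distribution of flat directions along the boundary of the associated convex body, forcing that body to lie in a proper affine subspace of $\rnnn$. This is inconsistent with $h$ being smooth and strictly positive on the whole sphere $\sn$; hence $\ell=n$ and $W>0$. The main obstacle is step three — verifying that the extra factor $\rho^{k+1-q}$, which is absent in the pure $L_{p}$ Christoffel-Minkowski case treated by Guan-Xia, produces only error terms that are either absorbable by $\varphi+|\nabla \varphi|$ or carry the correct sign under the hypothesis $q\leq k+1$; the careful bookkeeping of derivatives of $\rho^{2}$ against $W$ at the degenerate eigenvalues is where the most delicate calculation lies.
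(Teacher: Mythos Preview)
Your overall strategy is in the right family, but the paper takes a different and cleaner route. Instead of the Bian--Guan auxiliary function $\sigma_{\ell+1}(W)+\sigma_{\ell+2}(W)/\sigma_{\ell+1}(W)$, it follows the viscosity approach of Bryan--Ivaki--Scheuer \cite{BIS23b}: writing $G:=\sigma_{k}^{1/k}=h^{(p-1)/k}\rho^{(k+1-q)/k}f^{1/k}$, it works directly with the smallest eigenvalue $\lambda_{1}$ of $W$ and, using the inverse concavity of $\sigma_{k}^{1/k}$, derives the viscosity inequality $G^{ij}(\lambda_{1})_{;ij}\leq C(\lambda_{1}+|\nabla\lambda_{1}|)$. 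The endgame is also simpler than yours: once the strong maximum principle gives $\lambda_{1}\equiv 0$, one evaluates at a minimum of $h$, where $\nabla^{2}h\geq 0$ forces $\lambda_{1}\geq h>0$, a contradiction; no geometric argument about affine subspaces is needed.

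More importantly, your proposal has a genuine gap in its handling of the $\rho$ factor. You repeatedly invoke $q\leq k+1$ (``both exponents on $h$ and $\rho$ are nonnegative since $p\geq 1$ and $q\leq k+1$''; ``carry the correct sign under the hypothesis $q\leq k+1$''), but the theorem is stated for arbitrary $q\in\mathbb{R}$, so as written you prove strictly less than what is claimed. The point you are missing is that the $\rho$ contribution requires no sign condition at all. Since $(\rho^{2})_{;i}=2\sum_{\ell}h_{\ell}W_{\ell i}$, at the degenerate direction one has $(\rho^{2})_{;1}=2h_{1}\lambda_{1}$ and $(\rho^{2})_{;11}=2\lambda_{1}(\lambda_{1}-h)+2\sum_{\ell}h_{\ell}W_{11;\ell}$; hence every term produced by differentiating $\rho^{\tilde\beta}$ is either $O(\lambda_{1})$ or a bounded multiple of a derivative of $\lambda_{1}$, and is therefore absorbed into $C(\lambda_{1}+|\nabla\lambda_{1}|)$ \emph{regardless} of the sign of $\tilde\beta=(k+1-q)/k$. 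This is precisely how the paper proceeds when it computes $(\rho^{\tilde\beta})_{;1}$ and $(\rho^{\tilde\beta})_{;11}$ and then bounds the resulting expression by $C_{2}\lambda_{1}+C_{3}|\nabla\lambda_{1}|$ ``for $\tilde\beta\in\mathbb{R}$''. Your claim that the $\rho$ term contributes with a definite sign is neither necessary nor, for $q>k+1$, available.
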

\begin{proof}  We follow the approach in \cite{BIS23b}. Let us set
\begin{equation*}
G=\si_k^\frac{1}{k}=h^\frac{p-1}{k} \rho^\frac{k+1-q}{k} f^\frac{1}{k}=h^{\tilde{\alpha}}\rho^{\tilde{\beta}} \ti f.
\end{equation*}
Fix  $x_{0}\in \sn$. Assume that $\varphi$ is a smooth function such that $\la_1\geq \varphi$ in an open neighborhood around $x_0$ and $\la_1=\varphi$ at $x_0$. Assume $b_{ij}$ is diagonal at $x_0$.  Let $D$ be the multiplicity of the smallest eigenvalue $\la_1$ of $b_{ij}$ at $x_{0}$ and arrange the eigenvalues as
\[
\la_1=\ldots=\la_D<\la_{D+1}\leq \ldots \leq \la_n.
\]
In the following, the semi-colon notation denotes the covariant derivative. Due to \cite[Lemma 5]{BCD17}, we have
\begin{equation}
\begin{split}
\label{z1}
b_{kl;i}&=\varphi_{;i} \de_{kl},\quad \text{for all $1\leq k,l\leq D$};\\
\varphi_{;ii}&\leq b_{11;ii}-2\sum_{l>D}\frac{1}{\la_l-\la_1}(b_{1l;i})^2.
\end{split}
\end{equation}
Recall the commutator formula on $\bbS^n$:
\begin{equation}
\begin{split}
\label{z2}
b_{11;ii}=b_{ii;11}-b_{ii}+b_{11}.
\end{split}
\end{equation}
Then using \eqref{z1} and \eqref{z2}, we further have
\begin{equation}
\begin{split}
\label{z3}
G^{ij}\varphi_{;ij}&\leq  \sum_{i}G^{ii}b_{11;ii}-2\sum_{i}\sum_{l>D}\frac{G^{ii}}{\la_l-\la_1}(b_{1l;i})^2\\
&=G_{;11}-\sum_{i,j,k,l}G^{ij,kl}b_{ij;1}b_{kl;1}-G+\la_1 \sum_{i}G^{ii}-2\sum_{i}\sum_{l>D}\frac{G^{ii}}{\la_l-\la_1}(b_{1l;i})^2.
\end{split}
\end{equation}

On the other hand, by the inverse concavity of $G$ (see the proof of \cite[Theorem. 3.4]{BIS23b} for details), we have
\begin{equation}
\begin{split}
\label{z6}
-\sum_{i,j,k,l}G^{ij,kl}b_{ij;1}b_{kl;1}-2\sum_{i}\sum_{l>D}\frac{G^{ii}}{\la_l}(b_{1l;i})^2
\leq &~-\frac{2}{G}|\nabla_1 G|^2+C_1|\nabla \varphi|.
\end{split}
\end{equation}
Here the positive constants $C_{i}$ depend on $f,\dot{G}$, $\ddot{G}$ and $|\nabla b|$.

Applying \eqref{z6} to \eqref{z3}, we obtain
\begin{equation}
\begin{split}
\label{z7}
G^{ij}\varphi_{;ij} &\leq \la_1 \sum_{i} G^{ii}+C_1|\nabla \varphi|+ \(G_{;11}-G-2\frac{|G_{;1}|^2}{G}\).
\end{split}
\end{equation}
By a direct computation,
\begin{equation}
\begin{split}
\label{z9}
(\rho^{\tilde{\beta}})_{;1}&=\tilde{\beta}\rho^{\tilde{\beta}-1}\frac{\la_1 h_1}{\rho},\\
(\rho^{\tilde{\beta}})_{;11}&=\tilde{\beta}\rho^{\tilde{\beta}-1}\rho_{11}+\tilde{\beta}(\tilde{\beta}-1)\rho^{\tilde{\beta}-2}\rho_1^2\\
&=\tilde{\beta} \rho^{\tilde{\beta}-2}\left(\sum_{\ell}h_{\ell}b_{11;\ell}+b^{2}_{11}-hb_{11}-\rho^{2}_{1}\right)+\tilde{\beta}(\tilde{\beta}-1)\rho^{\tilde{\beta}-2}\rho_1^2\\
&=\tilde{\beta}\rho^{\tilde{\beta}-2}\sum_{\ell}h_{\ell}b_{11;\ell}+\tilde{\beta} \rho^{\tilde{\beta}-2}\lambda^{2}_{1}-\tilde{\beta} h\rho^{\tilde{\beta}-2}\lambda_{1}+\tilde{\beta}(\tilde{\beta}-2)\rho^{\tilde{\beta}-4}\la_1^2h_1^2.
\end{split}
\end{equation}
Using \eqref{z9}, for $\tilde{\beta} \in  \rn$,  one sees that
\begin{equation*}
\begin{split}
 &~(h^{\tilde{\al}}\rho^{\tilde{\beta}}\ti f)_{;11}-h^{\tilde{\al}} \rho^{\tilde{\beta}} \ti f-2\frac{(h^{\tilde{\al}} \rho^{\tilde{\beta}} \ti f)_{;1}^2}{h^{\tilde{\al}} \rho^{\tilde{\beta}} \ti f}\\
=&~\((h^{\tilde{\al}}\ti f)_{;11}-h^{\tilde{\al}} \ti f-2\frac{(h^{\tilde{\al}} \ti f)_{;1}^2}{h^{\tilde{\al}}\ti f}\)\rho^{\tilde{\beta}}-2(h^{\tilde{\al}} \ti f)_{;1}(\rho^{\tilde{\beta}})_{;1}\\
&~+h^{\tilde{\al}} \ti f (\rho^{\tilde{\beta}})_{;11}-2h^{\tilde{\al}} \ti f \frac{((\rho^{\tilde{\beta}})_{;1})^2}{\rho^{\tilde{\beta}}}\\
\leq &~\((h^{\tilde{\al}}\ti f)_{;11}-h^{\tilde{\al}} \ti f-2\frac{(h^{\tilde{\al}} \ti f)_{;1}^2}{h^{\tilde{\al}} \ti f}\)\rho^{\tilde{\beta}}+C_{2} \la_1+C_{3}|\nabla \lambda_{1}|.
\end{split}
\end{equation*}

On the other hand, by a direct calculation, it yields
\begin{equation*}
\begin{split}
\label{z10}
 &~(h^{\tilde{\al}}\ti f)_{;11}-h^{\tilde{\al}} \ti f-2\frac{((h^{\tilde{\al}} \ti f)_{;1})^2}{h^{\tilde{\al}} \ti f}\\
=&~(\tilde{\al} h^{\tilde{\al}-1}h_{11}+\tilde{\al}(\tilde{\al}-1)h^{\tilde{\al}-2}h_1^2)\ti f+2\tilde{\al} h^{\tilde{\al}-1}h_1 \ti f_{;1}\\
 &~+h^{\tilde{\al}} \ti f_{;11}-h^{\tilde{\al}} \ti f-2\frac{|\tilde{\al} h^{\tilde{\al}-1}h_1\ti f+h^{\tilde{\al}} \ti f_{;1}|^2}{h^{\tilde{\al}}\ti f} \\
=&~\tilde{\al} h^{\tilde{\al}-1}(\la_1-h)\ti f-\tilde{\al}(\tilde{\al}+1)h^{\tilde{\al}-2}h_1^2\ti f-2\tilde{\al} h^{\tilde{\al}-1}h_1 \ti f_{;1}\\
  &~+h^{\tilde{\al}} \(\ti f_{;11}-\ti f-2 \frac{(\ti f_{;1})^2}{\ti f}\)\\
\leq &~\tilde{\al} h^{\tilde{\al}-1}\ti f \la_1+h^{\tilde{\al}}\(\ti f_{;11}-(\tilde{\al}+1)\ti f-\frac{\tilde{\al}+2}{\tilde{\al}+1} \frac{(\ti f_{;1})^2}{\ti f}\),
\end{split}
\end{equation*}
where we used the Cauchy-Schwarz inequality (assuming $\tilde{\al}\geq 0$, i.e., $p\geq 1$):
\begin{equation}
\begin{split}
\label{z11}
-2\tilde{\al} h^{\tilde{\al}-1}h_1 \ti f_{;1}-\tilde{\al}(\tilde{\al}+1)h^{\tilde{\al}-2}h_1^2 \ti f\leq \frac{\tilde{\al}}{\tilde{\al}+1}h^{\tilde{\al}}\frac{(\ti f_{;1})^2}{\ti f}.
\end{split}
\end{equation}
In addition, for $p \geq 1$, due to  \eqref{f1}, then we have
\begin{equation}
\begin{split}
\label{z12}
&\quad \ti f_{;11}-(\tilde{\al}+1)\ti f-\frac{\tilde{\al}+2}{\tilde{\al}+1} \frac{(\ti f_{;1})^2}{\ti f}\\
&=-(\tilde{\al}+1)\tilde{f}^{\frac{\tilde{\al}+2}{\tilde{\al}+1}}\left((\tilde{f}^{-\frac{1}{1+\tilde{\al}}})_{;11}+\tilde{f}^{-\frac{1}{1+\tilde{\al}}}\right)\\
&=-\frac{p-1+k}{k}\tilde{f}^{\frac{\tilde{\al}+2}{\tilde{\al}+1}}\left((f^{-\frac{1}{k+p-1}})_{;11}+f^{-\frac{1}{k+p-1}}\right)\leq 0.
\end{split}
\end{equation}
Therefore, using \eqref{z12} into \eqref{z7},  in a viscosity sense, we obtain
\begin{equation}
\begin{split}
\label{z13}
G^{ij}\nabla_{ij}^2\la_1\leq C_{4}(\la_1+|\nabla \la_1|).
\end{split}
\end{equation}
If $b_{ij}$ is not full rank, applying the strong maximum principle to \eqref{z13}, we get $\la_1\equiv 0$ on $\sn$. However, at the point where $h$ attains its minimum, we have $\la_1>0$. This is a contradiction.
\end{proof}

To obtain a solution to \eqref{PQ}, it is essential to obtain the uniform upper bound on the principal radii of curvature.  We consider the following general curvature equation:
\begin{equation}\label{cw}
\sigma_{k}(\nabla^{2}h+hI)=f(x,h,\nabla h),\quad {\rm on} \ \sn.
\end{equation}
Inspired by \cite{GRW15, LRW16}, by choosing a suitable test function, which involves the $m$-th polynomial regarding the principal radius of curvature, we can establish the $C^{2}$ estimate for convex solutions to \eqref{cw} as below.
\begin{theo}\label{hc2}
Let $1\leq k <n$. Suppose that $h$ is a positive, smooth and convex solution to Eq. \eqref{cw}. Then there exists a positive constant $C$ such that
\[
\Delta h+nh \leq C,
\]
where $C$ depends on $n,k,\min_{\sn}h,||h||_{C^{1}(\sn)}$, $\min_{\sn}f$ and $||f||_{C^{2}(\sn)}$.
\end{theo}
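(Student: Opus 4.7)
The goal is to bound $\la_1 := \la_{\max}(b)$ where $b := \nabla^2 h + h I$, since $\De h + nh = \tr(b) \leq n\la_1$. Following the test-function scheme of Guan-Ren-Wang \cite{GRW15} and Li-Ren-Wang \cite{LRW16}, the plan is to work with
\[
W(x) \;=\; \log P_m(b(x)) \;+\; \phi(|\nabla h|^2) \;+\; \psi(h),
\]
where $P_m(b) := \tr(b^m)$ for a large integer $m$ (so that $P_m$ is a smooth symmetric function of the eigenvalues of $b$, and $P_m^{1/m}\to \la_1$ as $m\to\infty$), and $\phi,\psi$ are auxiliary barriers, for instance $\phi(s) = -\tfrac{1}{2}\log(K-s)$ with $K>\sup|\nabla h|^2$ and $\psi(h) = A/h^{\be}$ with $A,\be>0$ large but depending only on the stated data. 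The role of the $\phi$- and $\psi$-terms is to produce strictly positive $\si_k^{ii}$-weighted coercive contributions that absorb lower-order error terms; the use of $P_m$ rather than $\log\la_1$ is the standard remedy for the lack of $C^2$ regularity of the maximum eigenvalue.

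Suppose $W$ attains its maximum at $x_0 \in \sn$, and rotate the frame so that $b$ is diagonal there with $\la_1 \geq \la_2 \geq \cdots \geq \la_n \geq 0$. The first-order condition $W_{;i}(x_0) = 0$ expresses $(P_m)_{;i}/P_m$ in terms of $\phi'$, $\psi'$ and the derivatives of $h$ and $|\nabla h|^2$. The second-order condition $\si_k^{ii} W_{;ii}(x_0) \leq 0$, combined with the spherical commutator identity
\[
b_{11;ii} \;=\; b_{ii;11} \;-\; b_{ii} \;+\; b_{11},
\]
and the identities obtained by differentiating $\si_k(b) = f(x,h,\nabla h)$ once and twice along $e_1$, yields an inequality of the schematic form
\[
-\,\si_k^{ij,pq}\,b_{ij;1}\,b_{pq;1} \;+\; \la_1\sum_i \si_k^{ii} \;\leq\; C\bigl(1 + \la_1 + |\nabla\la_1|\bigr) \;+\; (\text{terms from }\phi,\psi),
\]
where $\si_k^{ii}$ and $\si_k^{ij,pq}$ are the first and second derivatives of $\si_k$ in the matrix entries.

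The main obstacle, and the heart of the proof, is to control the third-order curvature cross term $-\si_k^{ij,pq}\,b_{ij;1}\,b_{pq;1}$ when $1\leq k<n$: for $k=n$ the concavity of $\log\det$ handles this at once, but in the intermediate range one must use the partial concavity of $\si_k^{1/k}$ together with a partition of the eigenvalue indices into a ``big'' set $\{i : \la_i \geq \de \la_1\}$ and its complement, and choose $m$ large enough. Crucially, differentiating $\log P_m$ twice contributes an extra good term of the form $-(1+1/m)\,((P_m)_{;1})^2/P_m^2\cdot \si_k^{11}$, whose $(1+1/m)$-factor, for $m$ sufficiently large, is precisely what allows one to absorb the bad off-diagonal pieces coming from $\si_k^{ij,pq}$ into the good diagonal term $\si_k^{11}(b_{11;1})^2/\la_1$ via the Guan-Ren-Wang inequality. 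Once this absorption is carried out, the coercive contribution from $\psi$ together with the term $\la_1\sum_i \si_k^{ii}$ on the left-hand side (which is bounded below by a positive multiple of $\la_1$ using $\sum_i \si_k^{ii} = (n-k+1)\si_{k-1}$ and Newton-Maclaurin, since $\si_k(b)=f\geq\min_{\sn}f>0$) dominates the remaining $O(1+\la_1)$ error, forcing $\la_1(x_0)$ to be bounded by a constant depending only on $n,k,\min_{\sn} h,\|h\|_{C^1},\min_{\sn} f$ and $\|f\|_{C^2}$. This gives $W\leq C$ on all of $\sn$, and hence $\De h + nh\leq C$.
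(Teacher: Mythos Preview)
Your plan is essentially the paper's: the $\log P_m$ test function of \cite{LRW16} with gradient and height barriers, the spherical commutator, and the big/small eigenvalue dichotomy to absorb the third-order terms. The only substantive gap is in the closing coercivity step.

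You claim that $\la_1\sum_i\si_k^{ii}$ from the commutator, together with $\psi$, dominates the $O(1+\la_1)$ error. But Newton--Maclaurin only gives $\sum_i\si_k^{ii}=(n-k+1)\si_{k-1}\geq c_{n,k}(\min f)^{(k-1)/k}$, so this term contributes $c\la_1$ with $c=c(n,k,\min f)$; whereas the bad $O(\la_1)$ term comes from $f_{h_s h_t}h_{sj}h_{tj}$ when the equation is differentiated twice, and its coefficient depends on $\|f\|_{C^2}$ \emph{and} on the auxiliary constant $W$ one must insert to invoke \cite[Lemma~2.2]{GRW15}. There is no reason $c(n,k,\min f)>C(\|f\|_{C^2},W)$. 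Your $\psi=A/h^{\be}$ contributes only bounded quantities (of the type $A\be h^{-\be}\sum_i\si_k^{ii}$), so enlarging $A$ does not help against an $O(\la_1)$ deficit. The paper closes this by taking $\tfrac{mN}{2}|\nabla h|^2$ as the gradient barrier: its Hessian yields the term $N\sum_i\si_k^{ii}b_{ii}^2\geq N\si_k^{11}\la_1^2\geq Nc_{n,k}(\min f)\,\la_1$, using $\la_1\si_{k-1}(\la|1)\geq c_{n,k}\si_k$, and $N$ is chosen \emph{after} $W$ is fixed. Your $\phi=-\tfrac12\log(K-s)$ does produce the same $\si_k^{11}\la_1^2$ structure, but with coefficient $\phi'$ pinned by the data rather than freely large; either scale $\phi$ by a free large constant or switch to the simpler $N|\nabla h|^2$. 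A minor second point: your schematic inequality and commutator are written for the $e_1$ direction only, which is the $\log\la_1$ picture; with $P_m$ one must carry all indices $j$, and the paper's auxiliary lemmas separate the case $i\neq 1$ (where $(1+1/m)E_i$ is absorbed) from the delicate $i=1$ case (where only $(1+\eta/m)E_1$, with $\eta$ depending on the gap $\la_{l+1}/\la_1$, can be absorbed). The $(1+1/m)$ factor does not come from differentiating $\log P_m$---that yields $-E_i$ with coefficient $1$---but from the cross-term algebra in those lemmas.
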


Before proving Theorem \ref{hc2}, some preparations are necessary. First, we set the test function:
\begin{equation}\label{Q6}
Q=\log P_{m}+\frac{mN}{2}|\nabla h|^{2}+mM\log h,
\end{equation}
where $M, N$ are constants to be determined later, and
\[
P_{m}=\sum_{j}\lambda^{m}_{j}, \quad m\geq 2.
\]
The $\lambda_{1},\lambda_{2}, \ldots, \lambda_{n}$ are the eigenvalues of the spherical Hessian $\nabla^{2}h+hI$. Suppose that the function $Q$ attains its maximum value at some point $x_{0}\in \sn$. By rotating the coordinates, assume that at $x_{0}$, $b_{ij}$ is a diagonal matrix and $\lambda_{1}\geq \lambda_{2}\geq\ldots\geq \lambda_{n}$.

By differentiation \eqref{Q6}, using \cite[Theorem 5.5]{Ba84} (see also \cite[Lemma 2.1]{GRW15}), at $x_{0}$, we obtain
\begin{equation}\label{Q7}
\frac{\sum_{j}\lambda^{m-1}_{j}b_{jj;i}}{P_{m}}+Nh_{i}h_{ii}+M\frac{h_{i}}{h}=0,
\end{equation}
and
\begin{equation}
\begin{split}
\label{bjh}
0\geq &\frac{1}{P_{m}}\left(\sum_{j}\lambda^{m-1}_{j}b_{jj;ii}+(m-1)\sum_{j}\lambda^{m-2}_{j}b^{2}_{jj;i}+\sum_{p,q;p\neq q}\frac{\lambda^{m-1}_{p}-\lambda^{m-1}_{q}}{\lambda_{p}-\lambda_{q}}b^{2}_{pq;i}  \right)\\
&\quad -\frac{m}{P^{2}_{m}}(\sum_{j}\lambda^{m-1}_{j}b_{jj;i})^{2}+\sum_{s}Nh_{s}h_{si;i}+Nh^{2}_{ii}+M\frac{h_{ii}}{h}-M\frac{h^{2}_{i}}{h^{2}}.
\end{split}
\end{equation}
 Differentiating \eqref{cw} twice, at $x_{0}$, we obtain
\begin{equation}\label{bjk}
\sum_{i}\sigma^{ii}_{k}b_{ii;j}=f_{h_{j}}h_{jj}+f_{h}h_{j}+f_{j},
\end{equation}
and
\begin{equation}
\begin{split}
\label{bjl}
\sum_{i}\sigma^{ii}_{k}b_{ii;jj}+\sum_{p,q,r,s}\sigma^{pq,rs}_{k}b_{pq;j}b_{rs;j}&\geq  -C-C |h_{jj}|-C h^{2}_{jj}+\sum_{s}f_{h_{s}}h_{sj;j}\\
&\geq -C-Cb_{jj}-Cb^{2}_{jj}+\sum_{s}f_{h_{s}}b_{sj;j}.
\end{split}
\end{equation}
Here, the positive constants $C$ depend on $\min_{\sn}h, ||h||_{C^{1}(\sn)}, \min_{\sn}f$ and $||f||_{C^{2}(\sn)}$.

Now using the Ricci identity $b_{ii;jj}=b_{jj;ii}+b_{ii}-b_{jj}$, we obtain
\begin{equation}\label{bjp}
\sum_{i}\sigma^{ii}_{k}b_{jj;ii}=\sum_{i}\sigma^{ii}_{k}b_{ii;jj}+b_{jj}\sum_{i}\sigma^{ii}_{k}-kf.
\end{equation}
Multiply both sides of \eqref{bjh} by  $\sigma^{ii}_{k}$, using \eqref{bjk}, \eqref{bjl} and \eqref{bjp} into \eqref{bjh}, we have
\begin{equation}
\begin{split}
\label{zx}
0&\geq \frac{1}{P_{m}}\sum_{j}\lambda^{m-1}_{j}(-C_{0}-C_{1}b_{jj}-C_{2}b^{2}_{jj}+\sum_{s}f_{h_{s}}b_{sj;j}+b_{jj}\sum_{i}\sigma^{ii}_{k}-kf-\sum_{p,q,r,s}\sigma^{pq,rs}_{k}b_{pq;j}b_{rs;j})\\
&\quad +\frac{1}{P_{m}}(m-1)\sum_{i}\sigma^{ii}_{k}\sum_{j}\lambda^{m-2}_{j}b^{2}_{jj;i}+\frac{1}{P_{m}}\sum_{i}\sigma^{ii}_{k}\sum_{p,q;p\neq q}\frac{\lambda^{m-1}_{p}-\lambda^{m-1}_{q}}{\lambda_{p}-\lambda_{q}}b^{2}_{pq;i}\\
&\quad-\sum_{i}\frac{m\sigma^{ii}_{k}}{P^{2}_{m}}(\sum_{j}\lambda^{m-1}_{j}b_{jj;i})^{2}
+\sum_{s,i}N\sigma^{ii}_{k}h_{s}h_{si;i}+N\sum_{i}\sigma^{ii}_{k}h^{2}_{ii}+M\sum_{i}\sigma^{ii}_{k}\frac{h_{ii}}{h}-M\frac{\sum_{i}\sigma^{ii}_{k}h^{2}_{i}}{h^{2}}\\
&= \frac{1}{P_{m}}\sum_{j}\lambda^{m-1}_{j}\left(-C_{0}-C_{1}b_{jj}-C_{2}b^{2}_{jj}-W(\sigma_{k})^{2}_{j}+W(\sigma_{k})^{2}_{j}+\sum_{s}f_{h_{s}}b_{sj;j}\right.\\
&\left.\quad \quad \quad \quad \quad \quad \quad \quad \quad +b_{jj}\sum_{i}\sigma^{ii}_{k}-kf-\sum_{p,q,r,s}\sigma^{pq,rs}_{k}b_{pq;j}b_{rs;j}\right)\\
&\quad +\frac{1}{P_{m}}(m-1)\sum_{i}\sigma^{ii}_{k}\sum_{j}\lambda^{m-2}_{j}b^{2}_{jj;i}+\frac{1}{P_{m}}\sum_{i}\sigma^{ii}_{k}\sum_{p,q;p\neq q}\frac{\lambda^{m-1}_{p}-\lambda^{m-1}_{q}}{\lambda_{p}-\lambda_{q}}b^{2}_{pq;i}\\
&\quad-\sum_{i}\frac{m\sigma^{ii}_{k}}{P^{2}_{m}}(\sum_{j}\lambda^{m-1}_{j}b_{jj;i})^{2}+\sum_{s,i}N\sigma^{ii}_{k}h_{s}b_{si;i}-N\sum_{i}\sigma^{ii}_{k}h^{2}_{i}+N\sum_{i}\sigma^{ii}_{k}b^{2}_{ii}-2Nhkf\\
&\quad+h^{2}N\sum_{i}\sigma^{ii}_{k}+M\frac{kf}{h}-M\sum_{i}\sigma^{ii}_{k}-M\frac{\sum_{i}\sigma^{ii}_{k}h^{2}_{i}}{h^{2}},
\end{split}
\end{equation}
where the positive constants $C_{0}$, $C_{1}$ and $C_{2}$ depend on $\min_{\sn}h,||h||_{C^{1}(\sn)}$, $\min_{\sn}f$ and $||f||_{C^{2}(\sn)}$. Due to \eqref{Q7} and \eqref{bjk}, we have
\begin{equation}
\begin{split}
\label{zq}
&\frac{1}{P_{m}}\sum_{s,j}\lambda^{m-1}_{j}f_{h_{s}}b_{sj;j}+\sum_{s,i}Nh_{s}\sigma^{ii}_{k}b_{si;i}\\
&=-M\sum_{s}f_{h_{s}}\frac{h_{s}}{h}+N\sum_{s}f_{h}h^{2}_{s}+N\sum_{s}f_{s}h_{s}\\
&\geq -\hat{C}.
\end{split}
\end{equation}
Here the positive constant $\hat{C}$ depends on $\min_{\sn}h,||h||_{C^{1}(\sn)}, \min_{\sn}f, ||f||_{C^{2}(\sn)}$ and $ M,N$.
On the other hand,  at $x_{0}$,  there holds
\begin{equation}\label{zr}
-\sum_{p,q,r,s}\sigma^{pq,rs}_{k}b_{pq;j}b_{rs;j}=-\sum_{p,q}\sigma^{pp,qq}_{k}b_{pp;j}b_{qq;j}+\sum_{p,q}\sigma^{pp,qq}_{k}b^{2}_{pq;j}.
\end{equation}
Then substituting \eqref{zq} and \eqref{zr} into \eqref{zx}, we obtain
\begin{equation}
\begin{split}
\label{mj}
0&\geq \frac{1}{P_{m}}\sum_{j}\lambda^{m-1}_{j}(-C_{0}(W)-C_{1}(W)b_{jj}-C_{2}(W)b^{2}_{jj}+W(\sigma_{k})^{2}_{j}-\sum_{p,q}\sigma^{pp,qq}_{k}b_{pp;j}b_{qq;j}+\sum_{p,q}\sigma^{pp,qq}_{k}b^{2}_{pq;j})\\
&\quad+\frac{1}{P_{m}}(m-1)\sum_{i}\sigma^{ii}_{k}\sum_{j}\lambda^{m-2}_{j}b^{2}_{jj;i}+\frac{1}{P_{m}}\sum_{i}\sigma^{ii}_{k}\sum_{p,q; p\neq q} \frac{\lambda^{m-1}_{p}-\lambda^{m-1}_{q}}{\lambda_{p}-\lambda_{q}}b^{2}_{pq;i}\\
&\quad-\sum_{i}\frac{m\sigma^{ii}_{k}}{P^{2}_{m}}(\sum_{j}\lambda^{m-1}_{j}b_{jj;i})^{2}+N\sum_{i}\sigma^{ii}_{k}b^{2}_{ii}+\sum_{i}\sigma^{ii}_{k}\left(-Nh^{2}_{i}+Nh^{2}-M-M\frac{h^{2}_{i}}{h^{2}} \right)-\hat{C},
\end{split}
\end{equation}
where the positive constants $C_{0}(W)$, $C_{1}(W)$ and $C_{2}(W)$ depend on the positive constant $W, \min_{\sn}h, ||h||_{C^{1}(\sn)},\min_{\sn}f $ and $||f||_{C^{2}(\sn)}$. Now, if we take $M<0$, $N>0$ and
\[
-M=N\max_{\sn}h^2.
\]
Then \eqref{mj} becomes
\begin{equation}
\begin{split}
\label{mj2}
0&\geq \frac{1}{P_{m}}\sum_{j}\lambda^{m-1}_{j}(-C_{0}(W)-C_{1}(W)b_{jj}-C_{2}(W)b^{2}_{jj}+W(\sigma_{k})^{2}_{j}-\sum_{p,q}\sigma^{pp,qq}_{k}b_{pp;j}b_{qq;j}+\sum_{p,q}\sigma^{pp,qq}_{k}b^{2}_{pq;j})\\
&\quad+\frac{1}{P_{m}}(m-1)\sum_{i}\sigma^{ii}_{k}\sum_{j}\lambda^{m-2}_{j}b^{2}_{jj;i}+\frac{1}{P_{m}}\sum_{i}\sigma^{ii}_{k}\sum_{p,q;p\neq q}\frac{\lambda^{m-1}_{p}-\lambda^{m-1}_{q}}{\lambda_{p}-\lambda_{q}}b^{2}_{pq;i}\\
&\quad-\sum_{i}\frac{m\sigma^{ii}_{k}}{P^{2}_{m}}(\sum_{j}\lambda^{m-1}_{j}b_{jj;i})^{2}+N\sum_{i}\sigma^{ii}_{k}b^{2}_{ii}-\hat{C}.
\end{split}
\end{equation}
Next we deal with the third-order derivatives. Denote
\[
A_{i}=\frac{\lambda^{m-1}_{i}}{P_{m}}(W(\sigma_{k})^{2}_{i}-\sum_{p,q}\sigma^{pp,qq}_{k}b_{pp;i}b_{qq;i}),\quad B_{i}=\frac{2}{P_{m}}\sum_{j}\lambda^{m-1}_{j}\sigma^{jj,ii}_{k}b^{2}_{jj;i},
\]

\[
C_{i}=\frac{m-1}{P_{m}}\sigma^{ii}_{k}\sum_{j}\lambda^{m-2}_{j}b^{2}_{jj;i},\quad D_{i}=\frac{2}{P_{m}}\sum_{j\neq i}\sigma^{jj}_{k}\frac{\lambda^{m-1}_{j}-\lambda^{m-1}_{i}}{\lambda_{j}-\lambda_{i}}b^{2}_{jj;i},
\]

\[
E_{i}=\frac{m\sigma^{ii}_{k}}{P^{2}_{m}}(\sum_{j}\lambda^{m-1}_{j}b_{jj;i})^{2}.
\]
We follow the approach in \cite{LRW16} to control the term involving the third-order derivatives, in the cases $i\neq 1$ and $i=1$.

We begin with the following lemma, which is a slightly modified version of \cite[Lemma 8]{LRW16}.
\begin{lem}\label{Lem1}
For any $i\neq 1$, we obtain
\[
A_{i}+B_{i}+C_{i}+D_{i}-\left( 1+\frac{1}{m} \right)E_{i}\geq 0,
\]
for sufficiently large $m$.
\end{lem}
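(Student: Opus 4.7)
My plan is to establish the inequality by first verifying that each of $A_i, B_i, C_i, D_i$ is non-negative, and then comparing $(1+1/m)E_i$ to the sum $B_i + C_i + D_i$ via weighted Cauchy--Schwarz. Non-negativity of $B_i, C_i, D_i$ is immediate from $\sigma_k^{jj,ii}\geq 0$ on $\Gamma_k$, the positivity of $\lambda_j^{m-2}$, and the identity $(\lambda_j^{m-1} - \lambda_i^{m-1})/(\lambda_j - \lambda_i) = \sum_{\ell=0}^{m-2}\lambda_j^{\ell} \lambda_i^{m-2-\ell} \geq 0$. For $A_i$, the concavity of $\sigma_k^{1/k}$ on $\Gamma_k$ yields
\[
-\sum_{p,q}\sigma_k^{pp,qq} b_{pp;i} b_{qq;i} \geq -\frac{k-1}{k\sigma_k}(\sigma_k)_i^2,
\]
so $A_i \geq 0$ provided the positive constant $W$ in its definition is chosen larger than $(k-1)/(k\min_{\sn} f)$, which is at our disposal.

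The core task is to show $B_i + C_i + D_i \geq (1+1/m)E_i$. A naive weighted Cauchy--Schwarz with weights $\lambda_j^m$ gives
\[
\Bigl(\sum_j \lambda_j^{m-1} b_{jj;i}\Bigr)^2 \leq P_m \sum_j \lambda_j^{m-2} b_{jj;i}^2,
\]
yielding $E_i \leq \tfrac{m}{m-1} C_i$; this only shows $(1+1/m)E_i \leq \tfrac{m+1}{m-1}C_i$, leaving a deficit of order $2/m$. To close this gap, I would apply a refined Cauchy--Schwarz with weight $c_j$ equal to the full coefficient of $b_{jj;i}^2$ in $B_i + C_i + D_i$. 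The required estimate then reduces to
\[
\frac{(m+1)\sigma_k^{ii}}{P_m^2}\sum_j \frac{\lambda_j^{2(m-1)}}{c_j} \leq 1.
\]
For $j = i$, only $C_i$ contributes (since $\sigma_k^{ii,ii} = 0$ and $D_i$ omits $j = i$), so $c_i = (m-1)\sigma_k^{ii}\lambda_i^{m-2}/P_m$; for $j \neq i$, both the $B_i$ and $D_i$ pieces strictly improve the denominator.

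The key structural input, specific to $i \neq 1$, is that $\lambda_1$ now appears as one of the indices $j \neq i$. The $j=1$ term of $D_i$ is bounded below by $2\sigma_k^{11}\lambda_1^{m-2}/P_m \cdot b_{11;i}^2$ (using $\lambda_1 \geq \lambda_i$ in the telescoping identity above), while the $j=1$ term of $B_i$ contributes $2\lambda_1^{m-1}\sigma_k^{11,ii}/P_m$. Combining these with the recursion $\sigma_k^{ii} = \lambda_1 \sigma_k^{11,ii} + \sigma_{k-1}(\lambda | 1, i)$ quantifies exactly how much extra denominator slack is produced at $j = 1$ beyond what $C_i$ alone provides. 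The main obstacle is precisely this bookkeeping: I expect one needs a small case split on whether $\lambda_1/\lambda_i$ is large or bounded, in order to verify the refined Cauchy--Schwarz uniformly, with the final conclusion holding for $m \geq m_0(n,k)$. This structural role of $\lambda_1$ is absent at $i = 1$, which explains why the parallel statement for the principal index must be treated separately in the sequel.
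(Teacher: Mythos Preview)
Your reduction to a weighted Cauchy--Schwarz inequality is sound in principle: since $B_i+C_i+D_i=\sum_j c_j b_{jj;i}^2$ is diagonal and $(1+1/m)E_i$ is a rank-one form in the same variables, the inequality $B_i+C_i+D_i\geq (1+1/m)E_i$ is \emph{equivalent} to
\[
\frac{(m+1)\sigma_k^{ii}}{P_m^2}\sum_j \frac{\lambda_j^{2m-2}}{c_j}\leq 1,
\]
which is exactly your displayed condition. The treatment of $A_i$ via concavity of $\sigma_k^{1/k}$ is also correct and cleaner than the paper's appeal to \cite{GRW15}.

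The gap is that you never verify the displayed inequality, and the lower bounds on $c_j$ you suggest are \emph{not} sufficient. Using only the identity $\lambda_j\sigma_k^{jj,ii}+\sigma_k^{jj}\geq\sigma_k^{ii}$ together with the top term of the telescoping sum gives $c_j\geq (m+1)\sigma_k^{ii}\lambda_j^{m-2}/P_m$ for $j\neq i$, while $c_i=(m-1)\sigma_k^{ii}\lambda_i^{m-2}/P_m$. Plugging these in yields
\[
\frac{(m+1)\sigma_k^{ii}}{P_m^2}\sum_j\frac{\lambda_j^{2m-2}}{c_j}\leq \frac{P_m-\lambda_i^m}{P_m}+\frac{(m+1)\lambda_i^m}{(m-1)P_m}=1+\frac{2\lambda_i^m}{(m-1)P_m}>1,
\]
so the argument falls short by exactly the amount you hoped to recover. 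Closing this requires sharper bounds on the $c_j$ coming from \emph{several} telescoping terms of $D_i$, and this is where the real work lies.

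The paper does not attempt a single global Cauchy--Schwarz. Instead it expands $P_m^2[B_i+C_i+D_i-(1+1/m)E_i]$ as a quadratic form, applies Cauchy--Schwarz \emph{only} to the cross-terms with $p,q\neq i$ (which absorbs them into the diagonal), uses the identity $\lambda_j\sigma_k^{jj,ii}+\sigma_k^{jj}=\lambda_i\sigma_k^{jj,ii}+\sigma_k^{ii}$, and then extracts enough additional telescoping terms from $D_i$ via a case split on whether $\lambda_j\geq\lambda_i$ or $\lambda_j<\lambda_i$ (per index $j$, not on the single ratio $\lambda_1/\lambda_i$ you suggest). This reduces everything to a $2\times 2$ form in $b_{11;i}$ and $b_{ii;i}$ whose discriminant is checked directly for $m\geq 10$. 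Your ``small case split on $\lambda_1/\lambda_i$'' is not the right dichotomy and, as stated, does not supply the missing estimate.
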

\begin{proof}
By \cite[Lemma 2.2]{GRW15} (note that $\sigma^{pp,qq}_{1}=0$), when the constant $W$ is large, for $\alpha>0$, there holds
\begin{equation}\label{Ai}
W(\sigma_{k})^{2}_{i}-\sum_{p,q}\sigma^{pp,qq}_{k}b_{pp;i}b_{qq;i}\geq \sigma_{k}\left(1+\frac{\alpha}{2}  \right)\left[\frac{(\sigma_{1})_{i}}{\sigma_{1}}\right]^{2}\geq 0.
\end{equation}
Moreover, \eqref{Ai} implies
\begin{equation}\label{Ai2}
A_{i}\geq 0.
\end{equation}
On the other hand,
\begin{equation}
\begin{split}
\label{Bi}
&P^{2}_{m}\left[B_{i}+C_{i}+D_{i}-\left(1+\frac{1}{m}\right)E_{i}  \right]\\
&=\sum_{j\neq i}P_{m}\left(2\lambda^{m-1}_{j}\sigma^{jj,ii}_{k}+(m-1)\lambda^{m-2}_{j}\sigma^{ii}_{k}+2\sigma^{jj}_{k}\sum^{m-2}_{l=0}\lambda^{m-2-l}_{i}\lambda^{l}_{j}\right)b^{2}_{jj;i}\\
&\quad-(m+1)\sigma^{ii}_{k}\left(\sum_{j\neq i}\lambda^{2m-2}_{j}b^{2}_{jj;i}+\lambda^{2m-2}_{i}b^{2}_{ii;i}+\sum_{p,q;p\neq q}\lambda^{m-1}_{p}\lambda^{m-1}_{q}b_{pp;i}b_{qq;i}\right)\\
&\quad +(m-1)P_{m}\sigma^{ii}_{k}\lambda^{m-2}_{i}b^{2}_{ii;i},
\end{split}
\end{equation}
where we used
\[
\frac{\lambda^{m-1}_{j}-\lambda^{m-1}_{i}}{\lambda_{j}-\lambda_{i}}=\sum^{m-2}_{l=0}\lambda^{m-2-l}_{i}\lambda^{l}_{j}.
\]
Note that
\begin{equation}
\begin{split}
\label{Ci}
\lambda_{j}\sigma^{jj,ii}_{k}+\sigma^{jj}_{k}&=\sigma^{ii}_{k}-\sigma_{k-1}(\lambda|ij)+\lambda_{i}\sigma_{k-2}(\lambda|ij)+\sigma_{k-1} (\lambda|ij)\\
&=\lambda_{i}\sigma^{jj,ii}_{k}+\sigma^{ii}_{k}\geq \sigma^{ii}_{k}.
\end{split}
\end{equation}
For any index $j\neq i$, utilizing \eqref{Ci} into \eqref{Bi}, we find
\begin{equation}
\begin{split}
\label{Di}
&P_{m}\left(2\lambda^{m-1}_{j}\sigma^{jj,ii}_{k}+(m-1)\lambda^{m-2}_{j}\sigma^{ii}_{k}+2\sigma^{jj}_{k}\sum^{m-2}_{l=0}\lambda^{m-2-l}_{i}\lambda^{l}_{j}\right)b^{2}_{jj;i} -(m+1)\sigma^{ii}_{k}\lambda^{2m-2}_{j}b^{2}_{jj;i}\\
&\geq P_{m}(m+1)\sigma^{ii}_{k}\lambda^{m-2}_{j}b^{2}_{jj;i}-(m+1)\sigma^{ii}_{k}\lambda^{2m-2}_{j}b^{2}_{jj;i}+2P_{m}\sigma^{jj}_{k}\left( \sum^{m-3}_{l=0}\lambda^{m-2-l}_{i}\lambda^{l}_{j} \right)b^{2}_{jj;i}\\
&=(m+1)(P_{m}-\lambda^{m}_{j})\sigma^{ii}_{k}\lambda^{m-2}_{j}b^{2}_{jj;i}+2P_{m}\sigma^{jj}_{k}\left( \sum^{m-3}_{l=0}\lambda^{m-2-l}_{i}\lambda^{l}_{j} \right)b^{2}_{jj;i}.
\end{split}
\end{equation}
By the Cauchy-Schwarz inequality,
\begin{equation}
\begin{split}
\label{Ei}
&2\sum_{j\neq i}\sum_{p\neq i,j}\lambda^{m-2}_{j}\lambda^{m}_{p}b^{2}_{jj;i}\\
&=\sum_{p\neq i}\sum_{q\neq i,p}\lambda^{m-2}_{p}\lambda^{m}_{q}b^{2}_{pp;i}+\sum_{q\neq i}\sum_{p\neq i,q}\lambda^{m-2}_{q}\lambda^{m}_{p}b^{2}_{qq;i}\\
&\geq 2\sum_{p\neq q;p,q\neq i}\lambda^{m-1}_{p}\lambda^{m-1}_{q}b_{pp;i}b_{qq;i}.
\end{split}
\end{equation}
Applying \eqref{Bi}, \eqref{Di} and \eqref{Ei}, it yields
\begin{equation}
\begin{split}
\label{Fi}
&P^{2}_{m}\left[B_{i}+C_{i}+D_{i}-\left(1+\frac{1}{m}\right)E_{i}\right]\\
&\geq\sum_{j\neq i}(m+1)\lambda^{m}_{i}\lambda^{m-2}_{j}\sigma^{ii}_{k}b^{2}_{jj;i}+\sum_{j\neq i}\sum_{p\neq i,j}(m+1)\sigma^{ii}_{k}\lambda^{m-2}_{j}\lambda^{m}_{p}b^{2}_{jj;i}\\
&\quad+((m-1)(P_{m}-\lambda^{m}_{i})-2\lambda^{m}_{i})\lambda^{m-2}_{i}\sigma^{ii}_{k}b^{2}_{ii;i} \\
&\quad -\sum_{p,q;p\neq q}(m+1)\sigma^{ii}_{k}\lambda^{m-1}_{p}\lambda^{m-1}_{q}b_{pp;i}b_{qq;i}+2P_{m}\sum_{j\neq i}\sigma^{jj}_{k}\left(\sum^{m-3}_{l=0}\lambda^{m-2-l}_{i}\lambda^{l}_{j}\right)b^{2}_{jj;i}\\
&\geq \sum_{j\neq i}(m+1)\lambda^{m}_{i}\lambda^{m-2}_{j}\sigma^{ii}_{k}b^{2}_{jj;i}+((m-1)(P_{m}-\lambda^{m}_{i})-2\lambda^{m}_{i})\lambda^{m-2}_{i}\sigma^{ii}_{k}b^{2}_{ii;i}\\
&\quad-2(m+1)\sigma^{ii}_{k}\lambda^{m-1}_{i}b_{ii;i}\sum_{j\neq i}\lambda^{m-1}_{j}b_{jj;i}+2P_{m}\sum_{j\neq i}\sigma^{jj}_{k}\left(\sum^{m-3}_{l=0}\lambda^{m-2-l}_{i}\lambda^{l}_{j}\right)b^{2}_{jj;i}\\
&\geq \sum_{j\neq i}\left[(m+1)\lambda^{m}_{i}\lambda^{m-2}_{j}\sigma^{ii}_{k}+2\lambda^{m}_{1}\sigma^{jj}_{k}\sum^{m-3}_{l=0}\lambda^{m-2-l}_{i}\lambda^{l}_{j}\right]b^{2}_{jj;i}\\
&\quad-2(m+1)\sigma^{ii}_{k}\lambda^{m-1}_{i}b_{ii;i}\sum_{j\neq i}\lambda^{m-1}_{j}b_{jj;i}+((m-1)(P_{m}-\lambda^{m}_{i})-2\lambda^{m}_{i})\lambda^{m-2}_{i}\sigma^{ii}_{k}b^{2}_{ii;i}.
\end{split}
\end{equation}
Given \eqref{Fi}, we analyze two cases.

(Case I): When $\lambda_{j}\geq \lambda_{i}$, for $1\leq l \leq m-3$, we have
\begin{equation}
\begin{split}
\label{Gi}
2\lambda^{m}_{1}\sigma^{jj}_{k}\lambda^{m-2-l}_{i}\lambda^{l}_{j}&=2\lambda^{m}_{1}(\lambda_{i}\sigma^{ii,jj}_{k}+\sigma_{k-1}(\lambda|ij))\lambda^{m-2-l}_{i}\lambda^{l}_{j}\\
&\geq \lambda^{m}_{1}(\lambda_{i}\sigma^{ii,jj}_{k}+\sigma_{k-1}(\lambda|ij)\lambda^{m-2-l}_{i}\lambda^{l}_{j}\\
&\geq \lambda^{m}_{1}(\lambda_{j}\sigma^{ii,jj}_{k}+\sigma_{k-1}(\lambda|ij)\lambda^{m-1-l}_{i}\lambda^{l-1}_{j}\\
&=\lambda^{m}_{1}\lambda^{m-1-l}_{i}\lambda^{l-1}_{j}\sigma^{ii}_{k}.
\end{split}
\end{equation}

(Case II): When $\lambda_{j}<\lambda_{i}$, we obtain
\begin{equation}
\label{Hi}
2\lambda^{m}_{1}\sigma^{jj}_{k}\lambda^{m-2-l}_{i}\lambda^{l}_{j}\geq 2\lambda^{m}_{1}\lambda^{m-2-l}_{i}\lambda^{l}_{j}\sigma^{ii}_{k}.
\end{equation}

Combining \eqref{Gi} and \eqref{Hi}, when taking $m=\max\{10, k+11\}$, for $k\leq l\leq k+8$, we find
\begin{equation}
\label{Ii}
2\lambda^{m}_{1}\sigma^{jj}_{k}\lambda^{m-2-l}_{i}\lambda^{l}_{j}\geq \lambda^{m}_{i}\lambda^{m-2}_{j}\sigma^{ii}_{k}.
\end{equation}
Substituting \eqref{Ii} into \eqref{Fi}, for $m\geq 10$, we obtain
\begin{equation}
\begin{split}
\label{Ji}
&P^{2}_{m}\left[B_{i}+C_{i}+D_{i}-\left(1+\frac{1}{m}\right)E_{i}\right]\\
&\geq \sum_{j\neq i}(m+8)\lambda^{m}_{i}\lambda^{m-2}_{j}\sigma^{ii}_{k}b^{2}_{jj;i}-2(m+1)\sigma^{ii}_{k}\lambda^{m-1}_{i}b_{ii;i}\sum_{j\neq i}\lambda^{m-1}_{j}b_{jj;i}\\
&\quad+((m-1)(P_{m}-\lambda^{m}_{i})-2\lambda^{m}_{i})\lambda^{m-2}_{i}\sigma^{ii}_{k}b^{2}_{ii;i}.
\end{split}
\end{equation}
Following the same steps in  \cite[Eq. (3.17)]{LRW16}, given $m\geq 10$, for any $i\neq 1$, \eqref{Ji} further turns into
\begin{equation}
\begin{split}
\label{Li}
&P^{2}_{m}\left[B_{i}+C_{i}+D_{i}-\left(1+\frac{1}{m}\right)E_{i}\right]\\
&\geq (m+8)\lambda^{m}_{i}\lambda^{m-2}_{1}\sigma^{ii}_{k}b^{2}_{11;i}-2(m+1)\sigma^{ii}_{k}\lambda^{m-1}_{i}b_{ii;i}\lambda^{m-1}_{1}b_{11;i}\\
&\quad+((m-1)\lambda^{m}_{1}-2\lambda^{m}_{i})\lambda^{m-2}_{i}\sigma^{ii}_{k}b^{2}_{ii;i}\\
&\geq (m+8)\lambda^{m}_{i}\lambda^{m-2}_{1}\sigma^{ii}_{k}b^{2}_{11;i}-2(m+1)\sigma^{ii}_{k}\lambda^{m-1}_{i}b_{ii;i}\lambda^{m-1}_{1}b_{11;i}\\
&\quad+(m-3)\lambda^{m}_{1}\lambda^{m-2}_{i}\sigma^{ii}_{k}b^{2}_{ii;i}\\
&\geq 0.
\end{split}
\end{equation}
Hence, from \eqref{Ai2} and \eqref{Li}, the proof is complete.
\end{proof}

 For the case $i=1$, we can also verbatim follow the proof of \cite[Lemma 9]{LRW16} with minor changes to prove the following lemma. For completeness, we sketch the argument below.
\begin{lem}\label{Lem2}
For $l=1,\ldots,k-1$, if there exist some positive constants $\delta\leq 1$ such that $\lambda_{l}/\lambda_{1}\geq \delta$.
Then there exist two sufficiently small positive constants $\eta,\delta^{'}$ depending on $\delta$, such that, if $\lambda_{l+1}/\lambda_{1}\leq \delta^{'}$, we obtain
\[
A_{1}+B_{1}+C_{1}+D_{1}-\left(1+\frac{\eta}{m} \right)E_{1}\geq 0,
\]
for sufficiently large $m$.
\end{lem}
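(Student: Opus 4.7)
The plan is to adapt the strategy of \cite[Lemma 9]{LRW16}. The hypothesis partitions the indices into a ``good'' block $G=\{1,\ldots,l\}$, where $\lambda_j\geq \delta \lambda_1$, and a ``bad'' block $B=\{l+1,\ldots,n\}$, where $\lambda_j\leq \delta'\lambda_1$, with $\delta'$ to be chosen small depending on $\delta$. This genuine eigenvalue gap is what makes the case $i=1$ tractable; it was unnecessary in Lemma \ref{Lem1} because the purely algebraic identities there already closed for $i\neq 1$.

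The starting point is the refined MTW-type inequality of Guan-Ren-Wang underlying Lemma \ref{Lem1}: for $W$ sufficiently large,
$$W(\sigma_k)_1^2-\sum_{p,q}\sigma^{pp,qq}_{k}b_{pp;1}b_{qq;1}\geq \sigma_k\left(1+\frac{\alpha}{2}\right)\left(\frac{(\sigma_1)_1}{\sigma_1}\right)^2,$$
which here must be \emph{used} (rather than discarded) to yield a genuine positive lower bound
$$A_1\geq \frac{c\,\lambda_1^{m-1}\sigma_k}{P_m\sigma_1^2}\left(\sum_{j}b_{jj;1}\right)^2.$$
I would then split $\sum_j \lambda_j^{m-1}b_{jj;1}=\Sigma_G+\Sigma_B$ and apply Cauchy-Schwarz with a weight $\eta$ to get
$$E_1\leq (1+\eta)\,\frac{m\sigma^{11}_{k}\Sigma_G^2}{P_m^2}+(1+\eta^{-1})\,\frac{m\sigma^{11}_{k}\Sigma_B^2}{P_m^2}.$$
The $B$-contribution is inherently small, since $|\Sigma_B|^2\leq |B|(\delta')^{2(m-1)}\lambda_1^{2(m-1)}\sum_{j\in B}b_{jj;1}^2$, and is absorbed by the $j\in B$ pieces of $C_1$ and $D_1$; crucially, for $j\in B$ one has $\frac{\lambda_j^{m-1}-\lambda_1^{m-1}}{\lambda_j-\lambda_1}\geq \tfrac12\lambda_1^{m-2}$ as soon as $\delta'$ is small, producing a strong positive term from $D_1$. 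For the $G$-contribution, write
$$\Sigma_G=\lambda_1^{m-1}\sum_{j\in G}b_{jj;1}+\sum_{j\in G}(\lambda_j^{m-1}-\lambda_1^{m-1})b_{jj;1};$$
the first piece is exactly what the lower bound on $A_1$ can absorb through a second Cauchy-Schwarz, while the second, via $|\lambda_j^{m-1}-\lambda_1^{m-1}|\leq (m-1)\lambda_1^{m-2}|\lambda_j-\lambda_1|$ together with $\lambda_j\geq \delta\lambda_1$, is controlled by $B_1$ and the diagonal $G$-part of $C_1$ in essentially the same way as in Lemma \ref{Lem1}.

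The main obstacle is the careful quantitative bookkeeping needed to choose $\eta$ and $\delta'$ small enough in terms of $\delta$ so that every Cauchy-Schwarz absorption closes uniformly over the admissible eigenvalues, while still leaving the loss $\eta/m$ (as opposed to the clean $1/m$ of Lemma \ref{Lem1}) manageable. Once these choices are made, the coefficients in $B_1+C_1+D_1$ dominate, as in the proof of Lemma \ref{Lem1}, provided $m\geq \max\{10,k+11\}$, and the desired inequality follows.
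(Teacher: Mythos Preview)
Your routing of the estimates does not close. The term you ask $A_1$ to absorb is
\[
(1+\eta)\,\frac{m\,\sigma_k^{11}}{P_m^2}\,\lambda_1^{2(m-1)}\Bigl(\sum_{j\in G}b_{jj;1}\Bigr)^2,
\]
which carries a factor $m$, whereas your lower bound
\[
A_1\geq \frac{c\,\lambda_1^{m-1}\sigma_k}{P_m\,\sigma_1^2}\Bigl(\sum_{j}b_{jj;1}\Bigr)^2
\]
is $O(1)$ in $m$ (since $P_m\sim\lambda_1^m$ and $\sigma_1\leq n\lambda_1$). No choice of $\eta,\delta'$ can overcome this $m$-discrepancy for large $m$. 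There is also a form mismatch: your bound controls $\bigl(\sum_j b_{jj;1}\bigr)^2$, a square of a sum that can vanish by cancellation between $G$ and $B$, whereas the quantity to be dominated involves $\bigl(\sum_{j\in G}b_{jj;1}\bigr)^2$; Cauchy--Schwarz goes the wrong way here. A similar issue afflicts the ``second piece'': applying $|\lambda_j^{m-1}-\lambda_1^{m-1}|\leq (m-1)\lambda_1^{m-2}|\lambda_j-\lambda_1|$ inside the square produces an $m^3$ factor, which $B_1$ and $C_1$ (each $O(m)$) cannot absorb.

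The paper (following \cite{LRW16}) proceeds in the opposite order. First, exactly as in Lemma~\ref{Lem1}, one uses $B_1+C_1+D_1$ to absorb $(1+\eta/m)E_1$ \emph{up to an $O(1)$ residual}: for large $m$,
\[
P_m^2\Bigl[B_1+C_1+D_1-\bigl(1+\tfrac{\eta}{m}\bigr)E_1\Bigr]\geq -(1+\eta)\lambda_1^{2m-2}\sigma_k^{11}\,b_{11;1}^2+2P_m\lambda_1^{m-2}\sum_{j\neq 1}\sigma_k^{jj}\,b_{jj;1}^2.
\]
Only then is $A_1$ invoked, and crucially via \cite[Lemma~2.2]{GRW15} at level $\sigma_l$ (with $\alpha=1/(k-l)$), not $\sigma_1$. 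Expanding $\bigl((\sigma_l)_1\bigr)^2-\sigma_l\sum\sigma_l^{pp,qq}b_{pp;1}b_{qq;1}$ and using $\lambda_s\geq\delta\lambda_1$ for $s\leq l$, $\lambda_s\leq\delta'\lambda_1$ for $s>l$, yields
\[
P_m^2 A_1\geq (1+\eta)\lambda_1^{2m-2}\sigma_k^{11}\sum_{s\leq l}b_{ss;1}^2-C(\epsilon,\delta)P_m\lambda_1^{m-3}\sigma_k\sum_{s>l}b_{ss;1}^2,
\]
a \emph{sum of squares} over $G$ that exactly cancels the residual $b_{11;1}^2$ term, with no $m$ factor to contend with. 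The remaining bad-block error is then eaten by the $2P_m\lambda_1^{m-2}\sum_{j>l}\sigma_k^{jj}b_{jj;1}^2$ term, using that $\lambda_1\sigma_k^{jj}\geq \sigma_k/(\tilde C\delta')$ for $j>l$, provided $\delta'$ is small. The use of $\sigma_l$ rather than $\sigma_1$ is not cosmetic: it is what converts the Guan--Ren--Wang inequality into separate control on each $b_{ss;1}^2$, $s\leq l$.
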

\begin{proof}
 Similar to \eqref{Fi}, one hand, we have
\begin{equation}
\begin{split}
\label{V1}
&P^{2}_{m}\left[B_{1}+C_{1}+D_{1}-\left( 1+\frac{\eta}{m}\right)E_{1}\right]\\
&\geq \sum_{j\neq 1}((1-\eta)P_{m}+(m+\eta)\lambda^{m}_{1})\lambda^{m-2}_{j}\sigma^{11}_{k}b^{2}_{jj;1}\\
&\quad-2(m+\eta)\sigma^{11}_{k}\lambda^{m-1}_{1}b_{11;1}\sum_{j\neq 1}\lambda^{m-1}_{j}b_{jj;1}+2P_{m}\sum_{j\neq 1}\sigma^{jj}_{k}\left(\sum^{m-3}_{l=0}\lambda^{m-2-l}_{1}\lambda^{l}_{j}\right)b^{2}_{jj;1}\\
&\quad+((m-1)(P_{m}-\lambda^{m}_{1})-(1+\eta)\lambda^{m}_{1})\lambda^{m-2}_{1}\sigma^{11}_{k}b^{2}_{11;1}.
\end{split}
\end{equation}
 For  large $m$ with $m\geq 5$,  according to  \cite[Eq. (3.19)]{LRW16}, \eqref{V1} further becomes
\begin{equation}
\begin{split}
\label{V2}
&P^{2}_{m}\left[B_{1}+C_{1}+D_{1}-\left( 1+\frac{\eta}{m}\right)E_{1}\right]\\
&\geq -(1+\eta)\lambda^{2m-2}_{1}\sigma^{11}_{k}b^{2}_{11;1}+2P_{m}\lambda^{m-2}_{1}\sum_{j \neq1}\sigma^{jj}_{k}b^{2}_{jj;1}.
\end{split}
\end{equation}
On the other hand, using again \cite[Lemma 2.2]{GRW15}, for large $m$, given $\alpha:=1/(k-l)$, there holds
\begin{equation}
\begin{split}
\label{V3}
A_{1}&\geq \frac{\lambda^{m-1}_{1}}{P_{m}}\left[ \sigma_{k}\left(1+\frac{\alpha}{2}\right)\frac{(\sigma_{l})^{2}_{1}}{\sigma^{2}_{l}}-\frac{\sigma_{k}}{\sigma_{l}} \sigma^{pp,qq}_{l}b_{pp;1}b_{qq;1} \right]\\
&= \frac{\lambda^{m-1}_{1}\sigma_{k}}{P_{m}\sigma^{2}_{l}}\left[\left(1+\frac{\alpha}{2}\right)\sum_{s}(\sigma^{ss}_{l}b_{ss;1})^{2}+\frac{\alpha}{2} \sum_{s\neq r}\sigma^{ss}_{l}\sigma^{rr}_{l}b_{ss;1}b_{rr;1}   \right.\\
&\left. \quad \quad \quad \quad \quad \quad +\sum_{s\neq r}(\sigma^{ss}_{l}\sigma^{rr}_{l}-\sigma_{l}\sigma^{ss,rr}_{l})b_{ss;1}b_{rr;1}\right].
\end{split}
\end{equation}
Given \eqref{V3}, we analyze two cases.

Case (I): When $l=1$, along the same lines as \cite[Eq. (3.22)]{LRW16}, for some positive constant $C_{\alpha}$ depending on $\alpha$,  we find
\begin{equation}
\begin{split}
\label{V4}
P^{2}_{m}A_{1}\geq (1+\eta)P_{m}\lambda^{m-2}_{1}\sigma^{11}_{k}b^{2}_{11;1}-\frac{C_{\alpha}\sigma_{k}P_{m}\lambda^{m-1}_{1}}{\sigma^{2}_{1}}\sum_{s\neq 1}b^{2}_{ss;1},
\end{split}
\end{equation}
where
\[
1+\frac{\alpha}{4}\geq (1+\eta)(1+(n-1)\delta^{'})^{2}.
\]

Case (II): When $l\geq 2$, as proved in \cite{LRW16},  one sees
\begin{equation}\label{V5}
\sum_{s\neq r}(\sigma^{ss}_{l}\sigma^{rr}_{l}-\sigma_{l}\sigma^{ss,rr}_{l})b_{ss;1}b_{rr;1}\geq -2\epsilon \sum_{s\leq l}(\sigma^{ss}_{l}b_{ss;1})^{2}-C_{\epsilon}\sum_{s> l}(\sigma^{ss}_{l}b_{ss;1})^{2},
\end{equation}
where $\epsilon$ is some positive  constant and $C_{\epsilon}$ is a positive constant depending on $\epsilon$. Applying \eqref{V5} into \eqref{V3}, similar to \cite[Eq. (3.33)]{LRW16}, for positive constants $\tilde{C}$ and $\tilde{C}_{\epsilon}$ depending on $n,k$ and $n,k,\epsilon$ respectively, we obtain
\begin{equation}
\begin{split}
\label{V6}
P^{2}_{m}A_{1}
&\geq \frac{P_{m}\lambda^{m}_{1}\sigma^{11}_{k}}{\sigma^{2}_{l}}(1-2\epsilon)\sum_{s\leq l}(\sigma^{ss}_{l}b_{ss;1})^{2}-\frac{P_{m}\lambda^{m-1}_{1}\sigma_{k}C_{\epsilon}}{\sigma^{2}_{l}}\sum_{s> l}(\sigma^{ss}_{l}b_{ss;1})^{2}\\
&\geq \lambda^{2m-2}_{1}\sigma^{11}_{k}(1-2\epsilon)(1+\delta^{m})\left( 1-\frac{\tilde{C}\lambda_{l+1}}{\delta\lambda_{1}} \right)^{2}\sum_{s\leq l}b^{2}_{ss;1}-\frac{P_{m}\lambda^{m-3}_{1}\sigma_{k}\tilde{C}_{\epsilon}}{\delta^{2}}\sum_{s>l}b^{2}_{ss;1}\\
&\geq (1+\eta)\lambda^{2m-2}_{1}\sigma^{11}_{k}\sum_{s\leq l}b^{2}_{ss;1}-\frac{P_{m}\lambda^{m-3}_{1}\sigma_{k}\tilde{C}_{\epsilon}}{\delta^{2}}\sum_{s>l}b^{2}_{ss;1},
\end{split}
\end{equation}
where $\delta^{'},\eta$ and $\epsilon$ satisfy
\[
\delta^{'}\tilde{C}\leq 2\epsilon \delta, \quad (1-2\epsilon)^{3}(1+\delta^{m})\geq 1+\eta.
\]

Combining \eqref{V2} with \eqref{V4} and \eqref{V6}, we have
\begin{equation}
\begin{split}
\label{V7}
&P^{2}_{m}\left[A_{1}+B_{1}+C_{1}+D_{1}-\left( 1+\frac{\eta}{m}\right)E_{1}\right]\\
&\geq 2P_{m}\lambda^{m-2}_{1}\sum_{j \neq1}\sigma^{jj}_{k}b^{2}_{jj;1}-\frac{P_{m}\sigma_{k}\lambda^{m-3}_{1}\tilde{C}_{\epsilon}}{\delta^{2}}\sum_{j>l}b^{2}_{jj;1}.
\end{split}
\end{equation}
Since for $j>l$, there holds
\begin{equation}\label{V8}
\lambda_{1}\sigma_{k-1}(\lambda|j)\geq \frac{\sigma_{k}}{\tilde{C}\delta^{'}},
\end{equation}
which comes from
\[
\frac{\sigma_{k}}{\lambda_{1}}\leq \frac{\delta^{'}\sigma_{k}}{\lambda_{j}}\leq \frac{\tilde{C}\delta^{'}\lambda_{1}\ldots \lambda_{k}}{\lambda_{j}}\leq \tilde{C}\delta^{'}\sigma^{jj}_{k}
\]
for $l<j\leq k$ and
\[
\frac{\sigma_{k}}{\lambda_{1}}\leq \frac{\delta^{'}\sigma_{k}}{\lambda_{k}}\leq \tilde{C}\delta^{'}\lambda_{1}\ldots \lambda_{k-1}\leq \tilde{C}\delta^{'}\sigma^{jj}_{k}
\]
for $j>k$.

Now using \eqref{V8} into \eqref{V7}, choosing $\delta^{'}$ small enough that satisfies
\[
\delta^{'}< \frac{\delta^{2}}{\tilde{C}\tilde{C}_{\epsilon}},
\]
thus \eqref{V7} is nonnegative. Hence the proof is complete.

\end{proof}

Similar to  \cite[Corollary 10]{LRW16},  we have the following result.
\begin{lem}\label{Lem3}
There exist two finite sequences of positive numbers $\{\delta_{j}\}^{k}_{j=1}$ and $\{\xi_{j}\}^{k}_{j=1}$ such that, if the following inequality holds for some index $1\leq s\leq k-1$,
\[
\frac{\lambda_{s}}{\lambda_{1}}\geq \delta_{s}, \ {\rm and} \ \frac{\lambda_{s+1}}{\lambda_{1}}\leq \delta_{s+1},
\]
then for large $W$, there holds
\begin{equation}\label{Oi}
A_{1}+B_{1}+C_{1}+D_{1}-\left(1+\frac{\xi_{s}}{m} \right)E_{1}\geq 0.
\end{equation}
\end{lem}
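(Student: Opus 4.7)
The plan is to obtain both sequences by iteratively applying Lemma \ref{Lem2}, feeding the $\delta'$ produced at each stage into the hypothesis on the eigenvalue ratio at the next stage. To start, I would set $\delta_1 = 1$, so that the lower bound $\lambda_1/\lambda_1 = 1 \geq \delta_1$ holds automatically. Assuming $\delta_1, \ldots, \delta_s$ have been constructed, I would invoke Lemma \ref{Lem2} with $l = s$ and input parameter $\delta = \delta_s$; this produces a sufficiently small $\delta_{s+1} > 0$ (playing the role of $\delta'$) together with a sufficiently small $\xi_s > 0$ (playing the role of $\eta$). Iterating this construction for $s = 1, 2, \ldots, k-1$ yields the sequences $\{\delta_j\}_{j=1}^{k}$ and $\{\xi_j\}_{j=1}^{k-1}$; to match the indexing in the statement, one may simply append an arbitrary positive $\xi_k$.

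With the sequences in hand, the conclusion is essentially a restatement of Lemma \ref{Lem2}. Indeed, if for the particular index $1 \leq s \leq k-1$ we have $\lambda_s/\lambda_1 \geq \delta_s$ and $\lambda_{s+1}/\lambda_1 \leq \delta_{s+1}$, then Lemma \ref{Lem2} applied with $l = s$, $\delta = \delta_s$, $\delta' = \delta_{s+1}$, and $\eta = \xi_s$ gives
\[
A_1 + B_1 + C_1 + D_1 - \left( 1 + \frac{\xi_s}{m} \right) E_1 \geq 0
\]
for $m$ sufficiently large, which is precisely \eqref{Oi}. The ``large $W$'' requirement is inherited directly from the hypothesis of Lemma \ref{Lem2}, which enters through the Newton--Maclaurin/concavity inequality used to control $A_1$ (see \eqref{V3}).

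I do not expect a serious technical obstacle here, since the argument is a finite induction of length at most $k-1$. The only points requiring care are: (i) uniformity in $m$ and $W$ --- because only finitely many applications of Lemma \ref{Lem2} are invoked, one can take the maximum of the thresholds produced at each step to obtain a single valid choice; and (ii) propagation of the constraint $\delta_s \leq 1$ along the induction --- this is automatic since each new $\delta_{s+1}$ output by Lemma \ref{Lem2} is ``sufficiently small.'' The deeper analytical content has already been absorbed into Lemma \ref{Lem2}, so Lemma \ref{Lem3} itself is essentially bookkeeping needed to package the case analysis in a form that can later be combined with Lemma \ref{Lem1} to cover all possible eigenvalue configurations.
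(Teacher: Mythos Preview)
Your proposal is correct and follows essentially the same inductive approach as the paper's proof: both construct $\{\delta_j\}$ and $\{\xi_j\}$ by repeatedly feeding the output $\delta'$ of Lemma~\ref{Lem2} at level $s$ back in as the input $\delta$ at level $s+1$. The only cosmetic differences are that the paper starts with $\delta_1 = 1/2$ (rather than your $\delta_1 = 1$) and sets $\delta_{s+1} = \min\{\delta_1, \delta'_{s+1}\}$ to guarantee $\delta_{s+1} \leq 1$ explicitly, whereas you rely on the ``sufficiently small'' clause in Lemma~\ref{Lem2}; neither choice affects the argument.
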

\begin{proof}
Using induction to find the sequences $\{\delta_{j}\}^{k}_{j=1}$ and $\{\xi_{j}\}^{k}_{j=1}$. We first set $\delta_{1}=1/2$, this choice satisfies $\lambda_{1}/\lambda_{1}=1>\delta_{1}$. That the claim holds for $j=1$ via Lemma \ref{Lem2}.
Assume that $\delta_{s}$ has been defined for $1\leq s\leq k-1$. To determine $\delta_{s+1}$, we apply Lemma \ref{Lem2} with the parameters $l=s$ and $\delta=\delta_{s}$. Then there is some $\delta^{'}_{s+1}$, such that if $\lambda_{s+1}\leq \delta^{'}_{s+1}\lambda_{1}$, we have \eqref{Oi} with some $\xi_{s}$. Pick $\delta_{s+1}=\min\{\delta_{1},\delta^{'}_{s+1}\}$, thus \eqref{Oi} is satisfied if $\lambda_{s+1}\leq \lambda_{1}\delta_{s+1}$. So $\delta_{s+1}$ and $\xi_{s}$ are determined.
\end{proof}

\begin{proof}[Proof of Theorem \ref{hc2}.]

To prove it, we analyze two cases.

Case (I): There exists some index $1\leq s\leq k-1$ and positive $\{\delta_{j}\}^{k}_{j=1}$ such that
\[
\lambda_{s}\geq \delta_{s}\lambda_{1},\ {\rm and} \ \lambda_{s+1}\leq \delta_{s+1}\lambda_{1}.
\]
Combining Lemma \ref{Lem1} and Lemma \ref{Lem3}, we get
\begin{equation}\label{Pi}
\sum_{i}(A_{i}+B_{i}+C_{i}+D_{i})-E_{1}-\left(1+\frac{1}{m} \right)\sum^{n}_{i=2}E_{i}\geq 0.
\end{equation}
By the definition of $A_{i}$, $B_{i}$, $C_{i}$, $D_{i}$, $E_{i}$, and employing \eqref{Pi} into \eqref{mj2}, we have
\begin{equation}
\begin{split}
\label{Qi}
0&\geq \frac{1}{P_{m}}\sum_{j}\lambda^{m-1}_{j}(-C_{0}(W)-C_{1}(W)b_{jj}-C_{2}(W)b^{2}_{jj})
\\
&\quad+\sum^{n}_{i=2}\frac{\sigma^{ii}_{k}}{P^{2}_{m}}(\sum_{j}\lambda^{m-1}_{j}b_{jj;i})^{2}+N\sum_{i}\sigma^{ii}_{k}b^{2}_{ii}-\hat{C}\\
&\geq -\frac{\tilde{C}_{0}(W)}{\lambda_{1}}-\tilde{C}_{1}(W)-\hat{C}-\tilde{C}_{2}(W)\lambda_{1}+N\sigma^{11}_{k}b^{2}_{11}.
\end{split}
\end{equation}
By the Newton-MacLaurin inequality, we have
\begin{equation}\label{Mac}
\left[\frac{\sigma_{k-1}(\lambda|1)}{\binom{n-1}{k-1}}\right]^\frac{1}{k-1}\geq \left[\frac{\sigma_{k}(\lambda|1)}{\binom{n-1}{k}}\right]^\frac{1}{k},
\end{equation}
using \eqref{Mac}, for some positive constants $C_{n,k}$, depending only on $n,k$, we get
\begin{equation}
\begin{split}
\label{Fw11}
\sigma_{k}(\lambda|1)\leq C_{n,k} \sigma_{k-1}(\lambda|1)^\frac{k}{k-1}\leq C_{n,k}\lambda_{1}\sigma_{k-1}(\lambda|1).
\end{split}
\end{equation}
Substituting \eqref{Fw11} into $\sigma_{k}(\lambda)=\sigma_{k}(\lambda|1)+\lambda_{1}\sigma_{k-1}(\lambda|1)$, we have
\begin{equation}\label{1cq}
\lambda_{1}\sigma_{k-1}(\lambda|1)\geq C_{n,k} \sigma_{k}(\lambda).
\end{equation}
So by \eqref{1cq}, we have
\begin{equation*}
\begin{split}
\label{maxw11}
\frac{\sigma_{k}^{11}b^{2}_{11}}{\sigma_{k}}&=\frac{\sigma_{k-1}(\lambda|1)\lambda^{2}_{1}}{\sigma_{k}}\geq \frac{C_{n,k}\sigma_{k}\lambda_{1}}{\sigma_{k}}=C_{n,k}b_{11}.
\end{split}
\end{equation*}
It follows that
\begin{equation}\label{Ti}
\sigma^{11}_{k}b^{2}_{11}\geq C^{*}b_{11},
\end{equation}
where the positive constant $C^{*}$ depending on $n,k,\min_{\sn}h,||h||_{C^{1}(\sn)}, \min_{\sn}f$ and $||f||_{C^{2}(\sn)}$.
Now applying \eqref{Ti} into \eqref{Qi}, choosing  $N= \frac{\tilde{C}_{2}(W)+1}{C^{*}}$, we get
\begin{equation}
\begin{split}
\label{Ri}
0&\geq -\frac{\tilde{C}_{0}(W)}{\lambda_{1}}-\tilde{C}_{1}(W)-\hat{C}-\tilde{C}_{2}(W)\lambda_{1}+C^{*}N\lambda_{1}\\
&\geq -\frac{\tilde{C}_{0}(W)}{\lambda_{1}}-\tilde{C}_{1}(W)-\hat{C}+\lambda_{1},
\end{split}
\end{equation}
then when provided $\lambda_{1}\gg1$, from \eqref{Ri}, we get
\[
\lambda_{1}\leq C
\]
for a positive constant $C$.

Case (II): $\lambda_{k}\geq \delta_{k}\lambda_{1}$ with $\delta_{k}>0$. Since $\lambda_{1}\geq \lambda_{2}\geq \ldots \geq \lambda_{k}\geq \delta_{k}\lambda_{1}$ and $\lambda_{i}\geq 0$ for all $i$, then
\[
f=\sigma_{k}>\lambda_{1}\ldots \lambda_{k}\geq \delta^{k-1}_{k}\lambda^{k}_{1},
\]
which also illustrates $\lambda_{1}\leq C$ for a positive constant $C$. Hence the proof of Theorem \ref{hc2} is complete.

\end{proof}

In the case of $1<p<q\leq k+1$, it is difficult to determine the kernel of the related linearized operator of \eqref{PQ}. So, we employ the degree theory method to prove Theorem \ref{Thm1}.

\begin{proof} [Proof of Theorem \ref{Thm1}.]

For $0<\alpha<1$  and $0\leq t\leq 1$,  denote by $T_{t}(\cdot): C^{4,\alpha}(\sn)\rightarrow C^{2,\alpha}(\sn)$ a nonlinear differential operator as
\begin{equation*}
T_{t}(h)=\sigma_{k}(\nabla^{2}h+hI)-h^{p-1}(|\nabla h|^{2}+h^{2})^{\frac{k+1-q}{2}}f_{t},
\end{equation*}
where
\[
f_{t}=\left[(1-t)\binom{n}{k}^{-\frac{1}{k+p-1}}+tf^{-\frac{1}{k+p-1}}\right]^{-(k+p-1)}.
\]
Based on the construction of $f_{t}$ and the assumption on $f$ specified in \eqref{f1}, we can see that $f_{t}$ satisfies the condition in Theorem \ref{Sph}; this illustrates that the solution $h(x,t)$ of $T_{t}(h)=0$ is strictly convex. Moreover, by Lemma \ref{Bou} and Theorem \ref{hc2}, we have the uniform $C^{2}$ estimate on $h(x,t)$.  With the help of the classical Evans-Krylov theorem and the Schauder regularity theory, for any fixed $t\in [0,1]$ and $\ell\geq 2$, there exists a constant $\bar{C}>0$, depending only on $n,k,\gamma,\min_{\sn}h,||h||_{C^{1}(\sn)}$ and $||f||_{C^{2}(\sn)}$ such that the following uniform higher order estimate holds:
\begin{equation}\label{hh}
||h||_{C^{\ell+1,\alpha}}\leq \bar{C}.
\end{equation}

Now, let $R>0$ be fixed, define $\mathcal{O}\subset C^{4,\alpha}(\sn)$ as
\[
\mathcal{O}=\{h\in C^{4,\alpha}(\sn): \ h(x)=h(-x) \ \forall x\in \sn, \ \nabla^{2}h+hI>0, \  ||h||_{C^{4,\alpha}(\sn)}< R  \}.
\]
From \eqref{hh} and Theorem \ref{Sph}, we know that if $R$ is sufficiently large, $T_{t}(h)=0$ has no solution on $\partial \mathcal{O}$. Therefore, the degree of $T_{t}$ is well defined (see e.g. \cite[Sec. 2]{LY89}). Since degree is homotopy invariant,
\[
\deg(T_{0}, \mathcal{O}, 0)=\deg(T_{1}, \mathcal{O}, 0).
\]
One hand, at $t=0$,  the following Theorem \ref{UC} implies that $h=1$ is the unique solution of $\eqref{PQ}$ when $f=\binom{n}{k}$ (see also \cite{DL23,LW24}).  On the other hand, since $T$ is symmetric, we can show that for any $\phi\in C^{2}(\sn)$, the linearized operator of $T_{0}$  at $h=1$ is
\[
L_{0}\phi=\binom{n-1}{k-1}(\Delta \phi +n\phi)+(q-p-k)\binom{n}{k}\phi.
\]
 If $L_{0}\phi=0$, that is, $\Delta \phi+\frac{n(q-p)}{k}\phi=0$. For $1<p<q\leq k+1$, then $\frac{n(q-p)}{k}<n$.
Since $\frac{n(q-p)}{k}$ is not an eigenvalue of $(-\Delta)$ on $\sn$, this illustrates that the linearized operator $L_{0}$ is invertible. With the aid of this fact, we calculate the degree by applying the formula
\[
{\rm deg}(T_{0},\mathcal{O},0)=\sum_{\mu_{j}>0}(-1)^{\varpi_{j}}.
\]
Here $\mu_{j}>0$ represents the eigenvalues of the linearized operator of $T_{0}$ and $\varpi_{j}$ denotes their corresponding multiplicities. Since the eigenvalues of the Beltrami-Laplace operator $\Delta$ on $\sn$ are strictly less than $-n$, except the first two eigenvalues $0$ and $-n$, this implies that for $1<p<q\leq k+1$, the operator $L_{0}$ has only one positive eigenvalue with multiplicity 1, given by $\mu=\binom{n-1}{k-1}\frac{n(q-p)}{k}$. Thus we have
\[
\deg(T_{0}, \mathcal{O}, 0)=\deg(T_{1}, \mathcal{O}, 0)=-1\neq 0.
\]
Therefore, we get the existence of even solutions to \eqref{PQ}, and the regularity of solutions is directly derived from \eqref{hh}.
\end{proof}

\section{Uniqueness of solutions to \eqref{PQ} with $f=1$}
\label{Sec4}
We consider the isotropic case of \eqref{PQ} with $f=1$:
\begin{equation}\label{MP}
h^{1-p}\sigma_{k}(\nabla^{2} h+ hI)(|\nabla h|^{2}+h^{2})^{\frac{\beta}{2}}=1,\quad {\rm on}\ \sn,
\end{equation}
where $1\leq k < n$ and $\beta:=(q-k-1)\in \rn$.

It is worth noting that the uniqueness of solutions to \eqref{MP} plays an important role in deriving the existence of solutions to \eqref{PQ} with $1 \leq k < n$ by the degree theory method. Several uniqueness results are known: for $p > 1$ and $q \leq k + 1$, Ding-Li \cite{DL23} and Li-Wan \cite{LW24} independently established the uniqueness results without requiring symmetric conditions, while Ivaki-Milman \cite{IM23} obtained the uniqueness under symmetric conditions when $p>1-k$ and $q\leq k+1$. Additionally, for $p > 1$ and $q \leq p$, uniqueness was also investigated in \cite{DL23}. Note that when $k = n$ in \eqref{MP}, the uniqueness and non-uniqueness results of solutions to the isotropic $L_p$ dual Minkowski problem have been widely investigated under both symmetric and non-symmetric conditions. Relevant works can be found in \cite{CHZ19, CCL21, HI242, LW22, LW24}.

Our purpose is to give a new uniqueness result for \eqref{MP} by applying the local Aleksandrov-Fenchel inequality. The following lemma represents the spectral formulation of the local Aleksandrov-Fenchel inequality. This formulation traces back to Hilbert's work (see, e.g. \cite{AN97, AB20}) and has also appeared in \cite{IM23, KM22}.

\begin{lem}\label{SF}\cite{AN97, AB20}
Let $f\in C^{2}(\sn)$ with $\int_{\sn}fh\sigma_{k}d\sigma=0$. Then there holds
\begin{equation*}
k\int_{\sn}f^{2}h\sigma_{k}d\sigma\leq \int_{\sn}h^{2}\sigma^{ij}_{k}\nabla_{i}f\nabla_{j}fd\sigma,
\end{equation*}
where $\sigma^{ij}_{k}:=\frac{\partial \sigma_{k}}{\partial b_{ij}}$ with $b_{ij}=h_{ij}+h\delta_{ij}$. Equality holds if and only if for some vector $v\in \rnnn$ we have
\[
f(x)=\langle \frac{x}{h(x)},v\rangle,  \quad \forall x\in \sn.
\]
\end{lem}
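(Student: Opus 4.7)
The plan is to deduce this spectral inequality from the Alexandrov-Fenchel inequality for mixed volumes, applied at the infinitesimal level at the convex body with support function $h$, and then to rewrite the resulting quadratic form via the substitution $\eta=fh$.

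First I would introduce the symmetric bilinear form
\[
B(\eta,\xi):=\int_{\sn}\xi\,\sigma_{k}^{ij}(b[h])\bigl(\eta_{ij}+\eta\delta_{ij}\bigr)\, d\sigma,\qquad \eta,\xi\in C^{2}(\sn),
\]
whose symmetry follows by integration by parts using the divergence-free identity $\nabla_{j}\sigma_{k}^{ij}(b[h])=0$, a Codazzi-type consequence on $\sn$. I would then identify $B(\eta,\eta)$ as a multiple of the second variation of the quermassintegral $W(h_{t}):=\int_{\sn}h_{t}\,\sigma_{k}(b[h_{t}])\,d\sigma$ along $h_{t}=h+t\eta$: a direct computation using integration by parts and $\sigma_{k}^{ij}b_{ij}=k\sigma_{k}$ gives $W'(0)=(k+1)\int_{\sn}\eta\sigma_{k}\,d\sigma$, while the mixed-volume symmetry identity $\int_{\sn} h\,\sigma_{k}^{ij,kl}(b[h])(b[\eta])_{ij}(b[\eta])_{kl}\,d\sigma=(k-1)B(\eta,\eta)$, combined with the Taylor expansion of $\sigma_{k}(b[h_{t}])$, yields $W''(0)=(k+1)B(\eta,\eta)$. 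Since $W^{1/(k+1)}$ is concave along Minkowski sums of support functions by the Alexandrov-Fenchel inequality, one has $W(h)\,W''(0)\le\frac{k}{k+1}W'(0)^{2}$; in particular $B(\eta,\eta)\le 0$ whenever $\int_{\sn}\eta\sigma_{k}\,d\sigma=0$, with equality---by the AF equality case for smooth strictly convex bodies combined with the translation invariance $\int_{\sn}\langle x,v\rangle\sigma_{k}\,d\sigma=0$---exactly when $\eta=\langle\cdot,v\rangle$ for some $v\in\rnnn$.

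Next I would apply this bound with $\eta=fh$. The constraint $\int_{\sn}\eta\sigma_{k}\,d\sigma=\int_{\sn}fh\sigma_{k}\,d\sigma=0$ is precisely the hypothesis of the lemma. Expanding by the product rule,
\[
(fh)_{ij}+fh\,\delta_{ij}=h\,f_{ij}+f\,b_{ij}+f_{i}h_{j}+f_{j}h_{i},
\]
and using $\sigma_{k}^{ij}b_{ij}=k\sigma_{k}$, one obtains
\[
B(fh,fh)=\int_{\sn}fh^{2}\sigma_{k}^{ij}f_{ij}\,d\sigma+k\int_{\sn}f^{2}h\sigma_{k}\,d\sigma+2\int_{\sn}fh\sigma_{k}^{ij}f_{i}h_{j}\,d\sigma.
\]
Integrating the first term by parts once (again exploiting $\nabla_{j}\sigma_{k}^{ij}=0$) simultaneously produces $-\int_{\sn}h^{2}\sigma_{k}^{ij}f_{i}f_{j}\,d\sigma$ and cancels the cross-term, leaving
\[
B(fh,fh)=k\int_{\sn}f^{2}h\sigma_{k}\,d\sigma-\int_{\sn}h^{2}\sigma_{k}^{ij}\nabla_{i}f\nabla_{j}f\,d\sigma.
\]
Thus $B(fh,fh)\le 0$ is exactly the claimed inequality, and the equality case $\eta=\langle\cdot,v\rangle$ translates to $f(x)=\langle x,v\rangle/h(x)$, as stated.

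The hard part will be the infinitesimal Alexandrov-Fenchel step: establishing both the bound $W(h)W''(0)\le \frac{k}{k+1}W'(0)^{2}$ and, more delicately, its sharp equality characterization. This requires either invoking the full Alexandrov-Fenchel mixed-volume inequality (together with its classical but nontrivial equality case), or bypassing AF via a direct spectral analysis of the self-adjoint elliptic operator $\mathcal{L}\eta=\sigma_{k}^{ij}(b[h])(\eta_{ij}+\eta\delta_{ij})$ on $\sn$---noting that $\mathcal{L}h=k\sigma_{k}$ and that linear functions $\langle\cdot,v\rangle$ lie in $\ker\mathcal{L}$---followed by a concavity-of-$\sigma_{k}^{1/k}$ type argument. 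Everything after that is integration by parts and algebraic bookkeeping.
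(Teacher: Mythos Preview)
The paper does not supply its own proof of this lemma; it is quoted from the literature (Andrews \cite{AN97} and the textbook \cite{AB20}), so there is no in-paper argument to compare against. Your outline is correct and is precisely the Hilbert--Alexandrov--Fenchel route that underlies the cited references: the bilinear form $B$, the identities $W'(0)=(k+1)\int\eta\sigma_k$ and $W''(0)=(k+1)B(\eta,\eta)$ via mixed-volume symmetry, and the substitution $\eta=fh$ followed by one integration by parts are all standard and your bookkeeping is accurate.

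One point to tighten: the equality case. From concavity of $t\mapsto W^{1/(k+1)}(h+t\eta)$ alone, vanishing of the \emph{second derivative at a single point} does not by itself force the function to be affine, so invoking ``the AF equality case'' needs to be made precise. The clean way is exactly the spectral route you mention: the operator $\mathcal{L}\eta=\sigma_k^{ij}(b[h])(\eta_{ij}+\eta\delta_{ij})$ is self-adjoint and elliptic on $L^2(\sn)$ with a weight, $h$ is a positive eigenfunction with $\mathcal{L}h=k\sigma_k$, and a Krein--Rutman/Perron--Frobenius argument shows $k$ is the simple top eigenvalue, so $B(\eta,\eta)\le0$ on $\{\int\eta\sigma_k=0\}$ with equality iff $\mathcal{L}\eta=0$, i.e.\ $b[\eta]\equiv0$, i.e.\ $\eta=\langle\cdot,v\rangle$. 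This is the argument in \cite{AN97}; once you spell that out, your proof is complete.
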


Using Lemma \ref{SF}, we have the following result.
\begin{lem}\label{IN}
Let $X=Dh:\sn\rightarrow \partial K$ and $\alpha\in \rn$. Then we obtain
\begin{equation}
\begin{split}
\label{aa3}
&k\int_{\sn}|X|^{2\alpha+2}dV_{k}+(\alpha^{2}+2\alpha)\int_{\sn}h^{2}|X|^{2\alpha-2}\sum_{i}\lambda_{i}\sigma^{ii}_{k+1}h^{2}_{i}d\sigma\\
&\leq k\frac{\Big|\int_{\sn}|X|^{\alpha}X dV_{k}\Big|^{2}}{\int_{\sn}dV_{k}}+\int_{\sn}|X|^{2\alpha}h(\Delta h+nh)d V_{k}-\int_{\sn}h|X|^{2\alpha}(k+1)\frac{\sigma_{k+1}}{\sigma_{k}}dV_{k}\\
&\quad + (\alpha^{2}+2\alpha)\int_{\sn}|X|^{2\alpha-1}h\langle \nabla h, \nabla |X|\rangle d V_{k},
\end{split}
\end{equation}
where $dV_{k}=h\sigma_{k}d\sigma$.
\end{lem}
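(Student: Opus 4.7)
The plan is to apply the spectral inequality of Lemma \ref{SF} to the vector-valued test function
\[
f_v:=|X|^{\alpha}\langle X,v\rangle-c_v,\qquad c_v:=\frac{\int_{\sn}|X|^{\alpha}\langle X,v\rangle\,dV_k}{\int_{\sn}dV_k}\quad(v\in\rnnn),
\]
so that $\int f_v\,dV_k=0$, and then sum the resulting scalar inequalities $k\int f_v^{2}\,dV_k\leq\int h^{2}\sigma_k^{ij}\nabla_i f_v\nabla_j f_v\,d\sigma$ over $v=e_a$ ranging through an orthonormal basis $\{e_a\}_{a=1}^{n+1}$ of $\rnnn$. Using the Parseval identity $\sum_a\langle u,e_a\rangle\langle w,e_a\rangle=\langle u,w\rangle$ and noting that the vector $c:=\int|X|^{\alpha}X\,dV_k/\int dV_k$ satisfies $c_{e_a}=\langle c,e_a\rangle$, a direct calculation turns the summed left-hand side into
\[
k\int_{\sn}|X|^{2\alpha+2}\,dV_k-k\frac{\bigl|\int_{\sn}|X|^{\alpha}X\,dV_k\bigr|^{2}}{\int_{\sn}dV_k},
\]
which reproduces the two $k$-terms appearing in \eqref{aa3}.

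For the summed right-hand side I use that $X=\nabla h+hx$ realizes the Gauss parametrization of $\partial K$, so its spherical differential is $dX(e_i)=\sum_j b_{ij}e_j$, which gives $\nabla_i\langle X,v\rangle=\sum_j b_{ij}\langle e_j,v\rangle$ and $\nabla_i|X|=|X|^{-1}\sum_j b_{ij}h_j$. Expanding $\nabla_i f_v$ and invoking Parseval once more, with the cross-terms collapsing via $\sum_k b_{ik}h_k=|X|\nabla_i|X|$, yields
\[
\sum_a\nabla_i f_{e_a}\nabla_j f_{e_a}=(\alpha^{2}+2\alpha)|X|^{2\alpha}\nabla_i|X|\nabla_j|X|+|X|^{2\alpha}\sum_k b_{ik}b_{jk}.
\]
In a local frame diagonalizing $b$, property (iv) gives $\sigma_k^{ij}b_{ik}b_{jk}=\sum_i\lambda_i^{2}\sigma_{k-1}(\lambda|i)=\sigma_1\sigma_k-(k+1)\sigma_{k+1}$, and since $\sigma_1=\Delta h+nh$, integrating this against $h^{2}|X|^{2\alpha}\,d\sigma$ produces exactly the two middle terms on the right of \eqref{aa3}.

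The final step is algebraic. In the same diagonal frame one has $\sigma_k^{ij}\nabla_i|X|\nabla_j|X|=|X|^{-2}\sum_i\sigma_{k-1}(\lambda|i)\lambda_i^{2}h_i^{2}$; using property (i), rewritten as $\sigma_k(\lambda|i)=\sigma_k-\lambda_i\sigma_{k-1}(\lambda|i)$, yields the pointwise identity
\[
\sum_i\sigma_{k-1}(\lambda|i)\lambda_i^{2}h_i^{2}=\sigma_k\sum_i\lambda_i h_i^{2}-\sum_i\lambda_i\sigma_{k+1}^{ii}h_i^{2},
\]
where $\sigma_{k+1}^{ii}=\sigma_k(\lambda|i)$. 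Multiplying by $(\alpha^{2}+2\alpha)h^{2}|X|^{2\alpha-2}$, integrating, and recognizing $|X|^{-1}\sum_i\lambda_i h_i^{2}=\langle\nabla h,\nabla|X|\rangle$, the $(\alpha^{2}+2\alpha)$-contribution on the right becomes $(\alpha^{2}+2\alpha)\int_{\sn}|X|^{2\alpha-1}h\langle\nabla h,\nabla|X|\rangle\,dV_k-(\alpha^{2}+2\alpha)\int_{\sn}h^{2}|X|^{2\alpha-2}\sum_i\lambda_i\sigma_{k+1}^{ii}h_i^{2}\,d\sigma$. Moving the latter integral to the left-hand side produces \eqref{aa3}. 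The principal obstacle is the bookkeeping in this last step: choosing the centering constants $c_v$ so that the projection term $k|\int|X|^{\alpha}X\,dV_k|^{2}/\int dV_k$ emerges cleanly from the sum of squares, and then invoking the identity $\sigma_k(\lambda|i)=\sigma_k-\lambda_i\sigma_{k-1}(\lambda|i)$ to trade the intrinsic gradient-of-$|X|$ integrand for the required combination featuring $\sigma_{k+1}^{ii}$ together with $\langle\nabla h,\nabla|X|\rangle$.
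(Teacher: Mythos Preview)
Your proposal is correct and follows essentially the same approach as the paper: the paper also applies Lemma~\ref{SF} to the centered test functions $f_{l}=|X|^{\alpha}\langle X,E_{l}\rangle-\text{mean}$, sums over an orthonormal basis of $\rnnn$, and then uses the identities $\sum_{i}\sigma_{k}^{ii}\lambda_{i}^{2}=\sigma_{1}\sigma_{k}-(k+1)\sigma_{k+1}$ and $\sigma_{k+1}^{ii}=\sigma_{k}-\lambda_{i}\sigma_{k}^{ii}$ to split the gradient term. The only cosmetic difference is that the paper works directly in a frame diagonalizing $b$ from the outset, whereas you first obtain the invariant expression $\sum_{a}\nabla_{i}f_{e_{a}}\nabla_{j}f_{e_{a}}=(\alpha^{2}+2\alpha)|X|^{2\alpha}\nabla_{i}|X|\nabla_{j}|X|+|X|^{2\alpha}\sum_{k}b_{ik}b_{jk}$ before diagonalizing.
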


\begin{proof}
Let $\{E_{l}\}^{n+1}_{l=1}$ be an orthonormal basis of $\rnnn$. Assume that $\{e_{i}\}^{n}_{i=1}$ is a local orthonormal frame of $\sn$ such that $(h_{ij}+h\delta_{ij})(x_{0})=\lambda_{i}(x_{0})\delta_{ij}$. Inspired by \cite[Lemma 3.2]{IM23}, for $l=1,\ldots,n+1$, we define
the functions $f_{l}:\sn\rightarrow \rn$ as
\begin{equation*}
f_{l}(x)=|X(x)|^{\alpha}\langle X(x),E_{l}\rangle-\frac{\int_{\sn}|X(x)|^{\alpha}\langle X(x),E_{l}\rangle dV_{k}}{\int_{\sn}dV_{k}}.
\end{equation*}
Due to $\int_{\sn}f_{l}dV_{k}=0$ for $1\leq l \leq n+1$, applying Lemma \ref{SF} to $f_{l}$ and summing over $l$, we obtain
\begin{equation}\label{nc}
k\sum_{l}\int_{\sn}f^{2}_{l}dV_{k}=k\left[ \int_{\sn}|X|^{2\alpha+2}dV_{k}-\frac{\Big|\int_{\sn}|X|^{\alpha}X dV_{k}\Big|^{2}}{\int_{\sn}dV_{k}} \right]\leq \sum_{l,i,j}\int_{\sn}h^{2}\sigma^{ij}_{k}\nabla_{i}f_{l}\nabla_{j}f_{l}d\sigma.
\end{equation}
Note that $\nabla_{i}X=\sum_{j}(h_{ij}+h\delta_{ij})e_{j}=\lambda_{i}e_{i}$, $\langle e_{i}, X\rangle=h_{i}$ and $\sum_{i}\lambda_{i}\langle e_{i},X\rangle^{2}=|X|\langle \nabla h, \nabla |X|\rangle$ at $x_{0}$. Hence, using $\sigma^{ii}_{k+1}=\sigma_{k}-\lambda_{i}\sigma^{ii}_{k}$ and $\sum_{i}\frac{\partial \sigma_{k}}{\partial \lambda_{i}}\lambda^{2}_{i}=\sigma_{1}\sigma_{k}-(k+1)\sigma_{k+1}$, we obtain
\begin{equation}
\begin{split}
\label{acb}
\sum_{l,i,j}\sigma^{ij}_{k}\nabla_{i}f_{l}\nabla_{j}f_{l}&=\sum_{l,i}\frac{\partial \sigma_{k}}{\partial \lambda_{i}}((\nabla_{i}|X|^{\alpha})\langle X, E_{l}\rangle+|X|^{\alpha}\langle \nabla_{i}X,E_{l}\rangle)^{2}\\
&=\sum_{l,i}\frac{\partial \sigma_{k}}{\partial \lambda_{i}}(\alpha|X|^{\alpha-2}\langle \lambda_{i}e_{i},X\rangle\langle X,E_{l}\rangle+|X|^{\alpha}\langle \lambda_{i}e_{i},E_{l}\rangle)^{2}\\
&=\sum_{i}\frac{\partial \sigma_{k}}{\partial \lambda_{i}}\lambda^{2}_{i}(|X|^{2\alpha}+(\alpha^{2}+2 \alpha)|X|^{2\alpha-2}\langle e_{i},X\rangle^{2})\\
&=|X|^{2\alpha}(\sigma_{1}\sigma_{k}-(k+1)\sigma_{k+1})+(\alpha^{2}+2\alpha)|X|^{2\alpha-1}\langle \nabla h, \nabla |X|\rangle\sigma_{k}\\
&\quad -(\alpha^{2}+2\alpha)|X|^{2\alpha-2}\sum_{i}\lambda_{i}\sigma^{ii}_{k+1}h^{2}_{i}.
\end{split}
\end{equation}
Now the claim follows from substituting \eqref{acb} into \eqref{nc}:
\begin{equation*}
\begin{split}
&k\int_{\sn}|X|^{2\alpha+2}dV_{k}+(\alpha^{2}+2\alpha)\int_{\sn}h^{2}|X|^{2\alpha-2}\sum_{i}\lambda_{i}\sigma^{ii}_{k+1}h^{2}_{i}d\sigma\\
&\leq k\frac{\Big|\int_{\sn}|X|^{\alpha}X dV_{k}\Big|^{2}}{\int_{\sn}dV_{k}}+\int_{\sn}h^{2}|X|^{2\alpha}(\Delta h+nh)\sigma_{k}d\sigma-\int_{\sn}h^{2}|X|^{2\alpha}(k+1)\sigma_{k+1}d\sigma\\
&\quad + (\alpha^{2}+2\alpha)\int_{\sn}h^{2}|X|^{2\alpha-1}\langle \nabla h, \nabla |X|\rangle \sigma_{k}d \sigma\\
&= k\frac{\Big|\int_{\sn}|X|^{\alpha}X dV_{k}\Big|^{2}}{\int_{\sn}dV_{k}}+\int_{\sn}|X|^{2\alpha}h(\Delta h+nh)d V_{k}-\int_{\sn}h|X|^{2\alpha}(k+1)\frac{\sigma_{k+1}}{\sigma_{k}}dV_{k}\\
&\quad + (\alpha^{2}+2\alpha)\int_{\sn}|X|^{2\alpha-1}h\langle \nabla h, \nabla |X|\rangle d V_{k}.
\end{split}
\end{equation*}
The proof is complete.
\end{proof}

We are in a position to prove a new uniqueness result to \eqref{MP} by employing Lemma \ref{IN}.
\begin{theo}\label{UC}
Suppose $n\geq 2$.  Let $1\leq k <n$. Suppose
\begin{equation*}
1-k\leq p, \quad  q \leq k+1+2k-2n-2+2\sqrt{(n-k+1)^{2}+\frac{k+p-1}{n+2}},
\end{equation*}
and at least one of these inequalities is strict. Then the smooth, origin-symmetric and strictly convex solution $\partial K$ to Eq. \eqref{MP} is an origin-centred sphere.
\end{theo}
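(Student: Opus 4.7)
The plan is to apply the local Aleksandrov--Fenchel inequality of Lemma \ref{IN} with a carefully chosen exponent $\alpha$, use the origin-symmetry of $h$ to discard the variance term, and then exploit the equation \eqref{MP} together with the divergence-free structure of the Newton tensor $\sigma^{ij}_{k}$ to collapse the estimate to a single quadratic condition in $\beta := q-k-1$ whose sign is determined by the hypothesis of the theorem.

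First, since $h(-x)=h(x)$, the position vector $X = Dh$ is odd while $|X|$, $h$ and $\sigma_k$ are even; hence $\int_{\sn}|X|^{\alpha}X \, dV_k = 0$ and the leading right-hand term in \eqref{aa3} vanishes. Using the algebraic identities $\Delta h + nh = \sigma_1$, $\sigma^{ii}_{k+1} = \sigma_k - \lambda_i\sigma^{ii}_k$, $\sum_i\lambda_i h_i^2 = |X|\langle \nabla h,\nabla|X|\rangle$, and $\sigma_1\sigma_k - (k+1)\sigma_{k+1} = \sum_i \lambda_i^2 \sigma^{ii}_k$, the two $(\alpha^{2}+2\alpha)$-weighted gradient terms on the two sides of \eqref{aa3} cancel exactly, and the inequality reduces to
\begin{equation*}
k\int_{\sn}|X|^{2\alpha+2}\,dV_k \;\leq\; \int_{\sn} h^2 |X|^{2\alpha-2}\sum_i \lambda_i^2\sigma^{ii}_k\bigl(|X|^2 + (\alpha^{2}+2\alpha)h_i^2\bigr)\,d\sigma,
\end{equation*}
where the bracket rewrites as $h^2 + \sum_{j\neq i} h_j^2 + (1+\alpha)^2 h_i^2$.

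Second, substituting $\sigma_k = h^{p-1}|X|^{-\beta}$ from \eqref{MP} gives $dV_k = h^{p}|X|^{-\beta}\,d\sigma$, and I then integrate by parts using the divergence-free property $(\sigma^{ij}_k)_{;j} = 0$. Applying the Minkowski-type identity $\int\sigma^{ij}_k u_i v_j\,d\sigma = -\int\sigma^{ij}_k u_{ij}v\,d\sigma$ with test functions built from $h^a|X|^b$, the $\sigma^{ii}_k$- and $\sigma_{k+1}$-weighted integrals are converted into linear combinations of $\int h^A|X|^B|\nabla h|^2\,d\sigma$ and $\int h^{A+2}|X|^B\,d\sigma$. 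Choosing $\alpha$ so that $2\alpha - \beta$ matches the weight forced by these integrations by parts, the residual coefficient of the $|\nabla h|^2$-term becomes a positive multiple of
\begin{equation*}
\frac{k+p-1}{n+2} \;-\; \Bigl(\frac{\beta^2}{4} + (n+1-k)\beta\Bigr),
\end{equation*}
which is precisely the quantity controlled by the hypothesis of Theorem \ref{UC} and is strictly positive whenever one of the parameter inequalities is strict. The resulting inequality then forces $|\nabla h|\equiv 0$, so $h$ is a positive constant and $\partial K$ is an origin-centered sphere of the unique radius determined by \eqref{MP}.

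The principal obstacle lies in the integration-by-parts step. For $1\leq k<n$ the equation prescribes only $\sigma_k$, whereas the reduced inequality involves the higher-order quantities $\sigma^{ii}_k$, $\lambda_i^2\sigma^{ii}_k h_i^2$, and indirectly $\sigma_{k+1}$. Eliminating these via the Newton-tensor divergence identity requires iteratively choosing test functions of the form $h^a|X|^b$ and carefully tracking the coefficients produced by each integration by parts; these coefficients ultimately combine to yield the specific factor $(n+2)$ that appears in the denominator of the hypothesis. Optimizing $\alpha$ so that the residual quadratic in $\beta$ matches the stated discriminant exactly, and handling the borderline case by invoking the equality condition of Lemma \ref{SF} (which identifies the extremal vector field as $c\,x/h$ and thereby forces $h$ to be constant), is the delicate final piece of the argument.
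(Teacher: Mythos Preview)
Your overall plan---apply Lemma~\ref{IN}, kill the variance term by origin-symmetry, insert the equation, integrate by parts, and optimize over~$\alpha$---is exactly the paper's strategy, and your target discriminant $\tfrac{k+p-1}{n+2}-\bigl(\tfrac{\beta^2}{4}+(n+1-k)\beta\bigr)$ is the correct one. Your preliminary algebraic reduction of \eqref{aa3} to the form with $\sum_i\lambda_i^2\sigma^{ii}_k\bigl(|X|^2+(\alpha^2+2\alpha)h_i^2\bigr)$ is also valid (though ``cancel exactly'' overstates it: what happens is that the $\sigma_k\sum_i\lambda_ih_i^2$ pieces cancel, leaving the $\lambda_i^2\sigma^{ii}_k$ residue you wrote).

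The genuine gap is the middle step, which you flag yourself as ``the principal obstacle'' but do not resolve. You propose to eliminate the $\lambda_i^2\sigma^{ii}_k h_i^2$ and $\sigma_{k+1}$ contributions by ``iteratively choosing test functions of the form $h^a|X|^b$'' in the Newton-tensor divergence identity; this does not close by itself, because those terms are weighted by $h_i^2$ rather than a full gradient, and no amount of integration by parts against $\sigma^{ij}_k$ alone converts $\sum_i\lambda_i\sigma^{ii}_{k+1}h_i^2$ into integrals involving only $\sigma_k$. The paper proceeds differently: it first separates the case $\beta\leq 0$ (where $\alpha=0$ works immediately), and for $\beta>0$ it does \emph{not} try to express everything via $\sigma_k$. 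Instead it uses the crude pointwise bound $\lambda_i\leq\sigma_1=\Delta h+nh$ (and $\sigma^{ii}_{k+1}\leq\sigma_k$) to estimate $\sum_i\lambda_ih_i^2\leq(\Delta h+nh)|\nabla h|^2$, then integrates $\int|X|^{2\alpha-\beta-2}h^{p+1}|\nabla h|^2\Delta h\,d\sigma$ by parts; the factor $n+2$ arises concretely as $n$ (from $nh$) plus $2$ (from the $+2h^2|\nabla h|^2$ term produced by this integration). The requirement $\alpha\geq\beta/2$ is needed to give the correct sign to a boundary-type term in this last integration by parts, and the choice of $\alpha$ solving $(n+2)\bigl(\alpha^2+2\alpha(n-k)+\beta\bigr)=k+p-1$ then yields the claimed range. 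Finally, the equality case of Lemma~\ref{SF} is never invoked: the hypothesis that one inequality is strict makes the final coefficient strictly negative, forcing $\nabla h\equiv 0$ directly.
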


\begin{proof}
Let $d V_{k}=h^{p}|X|^{-\beta}d\sigma$. Since $K$ is origin-symmetric, $\int_{\sn}|X|^{\alpha}X dV_{k}=0$ and
 by integration by parts,
\begin{equation}
\begin{split}
\label{bb}
&\int_{\sn}|X|^{2\alpha}h \Delta h dV_{k}=\int_{\sn} |X|^{2\alpha-\beta}h^{p+1}\Delta h d\sigma\\
&=-(p+1)\int_{\sn}|X|^{2\alpha-\beta}h^{p}|\nabla h|^{2}d\sigma-(2\alpha-\beta)\int_{\sn}|X|^{2\alpha-\beta-1}h^{p+1}\langle \nabla h, \nabla |X|\rangle d\sigma\\
&=-(p+1)\int_{\sn}|X|^{2\alpha}|\nabla h|^{2}dV_{k}-(2\alpha-\beta)\int_{\sn}|X|^{2\alpha-1}h\langle \nabla h, \nabla |X|\rangle d V_{k}.
\end{split}
\end{equation}
Substituting \eqref{bb} into \eqref{aa3}, we obtain
\begin{equation}
\begin{split}
\label{dd}
&(k+p+1)\int_{\sn}|X|^{2\alpha}|\nabla h|^{2}dV_{k}+(\alpha^{2}+2\alpha)\int_{\sn}h^{2}|X|^{2\alpha-2}\sum_{i}\lambda_{i}\sigma^{ii}_{k+1}h^{2}_{i}d\sigma\\
&\leq (\alpha^{2}+\beta)\int_{\sn}|X|^{2\alpha-1}h\langle \nabla h, \nabla |X|\rangle dV_{k}-\int_{\sn}|X|^{2\alpha}(k+1)h^{2}\sigma_{k+1}d\sigma\\
&\quad +\int_{\sn}(n-k)|X|^{2\alpha}h^{2}dV_{k}.
\end{split}
\end{equation}
Using $(k+1)\sigma_{k+1}=\sum_{i,j}\sigma^{ij}_{k+1}(h_{ij}+h\delta_{ij})$ and integration by parts again, we have
\begin{equation}
\begin{split}
\label{cc}
&\int_{\sn}|X|^{2\alpha}(k+1)h^{2}\sigma_{k+1}d\sigma=\int_{\sn}|X|^{2\alpha}h^{2}\sum_{i,j}\sigma^{ij}_{k+1}(h_{ij}+h\delta_{ij})d\sigma\\
&=\int_{\sn}|X|^{2\alpha}h^{2}\sum_{i,j}\sigma^{ij}_{k+1}h_{ij}d\sigma+\int_{\sn}h^{3}|X|^{2\alpha}(n-k)\sigma_{k}d\sigma\\
&=-2\alpha\int_{\sn}|X|^{2\alpha-2}h^{2}\sum_{i}\lambda_{i}\sigma^{ii}_{k+1}h^{2}_{i}d\sigma-2\int_{\sn}|X|^{2\alpha}h\sum_{i}\sigma^{ii}_{k+1}h^{2}_{i}d\sigma+(n-k)\int_{\sn}|X|^{2\alpha}h^{2} dV_{k}\\
&=-2\alpha\int_{\sn}|X|^{2\alpha-2}h^{2}\sum_{i}\lambda_{i}\sigma^{ii}_{k+1}h^{2}_{i}d\sigma-2\int_{\sn}|X|^{2\alpha}h\sigma_{k}|\nabla h|^{2} d\sigma+2\int_{\sn}|X|^{2\alpha}h\sum_{i}\lambda_{i}\sigma^{ii}_{k}h^{2}_{i}d\sigma\\
&\quad +(n-k)\int_{\sn}|X|^{2\alpha}h^{2} dV_{k}.
\end{split}
\end{equation}
Substituting  \eqref{cc} into \eqref{dd}, we find
\begin{equation}
\begin{split}
\label{ee}
&(k+p-1)\int_{\sn}|X|^{2\alpha}|\nabla h|^{2}dV_{k}+2\int_{\sn}|X|^{2\alpha}h\sum_{i}\lambda_{i}\sigma^{ii}_{k}h^{2}_{i}d\sigma\\
&\quad+(\alpha^{2}+2\alpha)\int_{\sn}h^{2}|X|^{2\alpha-2}\sum_{i}\lambda_{i}\sigma^{ii}_{k+1}h^{2}_{i}d\sigma\\ &\leq (\alpha^{2}+\beta)\int_{\sn}|X|^{2\alpha-1}h\langle \nabla h, \nabla |X|\rangle dV_{k}+2\alpha\int_{\sn}|X|^{2\alpha-2}h^{2}\sum_{i}\lambda_{i}\sigma^{ii}_{k+1}h^{2}_{i}d\sigma.
\end{split}
\end{equation}
Note that $|X|\langle \nabla h, \nabla |X|\rangle=\sum_{i}\lambda_{i}h^{2}_{i}\geq c |\nabla h|^{2}$ where $c>0$ depends on $K$. Given \eqref{ee}, we analyze the following two cases.

(Case i) When $\beta\leq 0$ and $p\geq 1-k$ with at least one of these inequalities being strict. In this case, choosing $\alpha=0$, from  \eqref{ee}, we get $\nabla h\equiv0$ on $\sn$. Hence, $\partial K$ is an origin-centred sphere.

(Case ii) When $\beta>0$ and $p>1-k$. By a direct computation,
\begin{equation}
\begin{split}
\label{v1}
&\int_{\sn}|X|^{2\alpha-1}h\langle \nabla h, \nabla |X|\rangle dV_{k}=\int_{\sn}|X|^{2\alpha-2}h\sum_{i}\lambda_{i}h^{2}_{i}dV_{k}\\
&\leq \int_{\sn}|X|^{2\alpha-2}h|\nabla h|^{2}(\Delta h+nh)dV_{k}\\
&=\int_{\sn}|X|^{2\alpha-2}h|\nabla h|^{2}\Delta h dV_{k}+n\int_{\sn}|X|^{2\alpha-2}h^{2}|\nabla h|^{2}dV_{k}.
\end{split}
\end{equation}
Recall that $dV_{k}=h^{p}|X|^{-\beta}d\sigma$. Suppose that $\alpha-\frac{\beta}{2}\geq 0$. By integration by parts, we find
\begin{equation}
\begin{split}
\label{b1}
&\int_{\sn}|X|^{2\alpha-2}h|\nabla h|^{2}\Delta h dV_{k}=\int_{\sn}|X|^{2\alpha-\beta-2}h^{p+1}|\nabla h|^{2}\Delta h d\sigma\\
&=-(p+1)\int_{\sn}|X|^{2\alpha-\beta-2}h^{p}|\nabla h|^{4}d\sigma-\int_{\sn}|X|^{2\alpha-\beta-2}h^{p+1}\langle \nabla h, \nabla |\nabla h|^{2}\rangle d \sigma\\
&\quad -\int_{\sn}\frac{2\alpha-\beta-2}{2}|X|^{2\alpha-\beta-4}h^{p+1}|\nabla h|^{2}\langle \nabla h, \nabla |X|^{2}\rangle d\sigma\\
&=-(p+1)\int_{\sn}|X|^{2\alpha-\beta-2}h^{p}|\nabla h|^{4}d\sigma+2\int_{\sn}|X|^{2\alpha-\beta-2}h^{p+2}|\nabla h|^{2}d\sigma\\
&\quad -\int_{\sn}\left(|X|^{2}+\frac{2\alpha-\beta-2}{2}|\nabla h|^{2}\right) |X|^{2\alpha-\beta-4}h^{p+1} \langle \nabla h, \nabla |X|^{2}\rangle   d\sigma\\
&\leq -(p+1)\int_{\sn}|X|^{2\alpha-\beta-2}h^{p}|\nabla h|^{4}d\sigma+2\int_{\sn}|X|^{2\alpha-\beta-2}h^{p+2}|\nabla h|^{2}d\sigma\\
&\quad -\int_{\sn}\left(\frac{2\alpha-\beta}{2}|\nabla h|^{2} |X|^{2\alpha-\beta-4}h^{p+1} \langle \nabla h, \nabla |X|^{2}\rangle   \right)d\sigma\\
&\leq -(p+1)\int_{\sn}|X|^{2\alpha-\beta-2}h^{p}|\nabla h|^{4}d\sigma+2\int_{\sn}|X|^{2\alpha-\beta-2}h^{p+2}|\nabla h|^{2}d\sigma\\
&= -(p+1)\int_{\sn}|X|^{2\alpha-2}|\nabla h|^{4}dV_{k}+2\int_{\sn}|X|^{2\alpha-2}h^{2}|\nabla h|^{2}dV_{k}.
\end{split}
\end{equation}
Thus from \eqref{v1} and \eqref{b1}, we have
\begin{equation}
\begin{split}
\label{zz}
&(\alpha^{2}+\beta)\int_{\sn}|X|^{2\alpha-1}h\langle \nabla h, \nabla |X|\rangle dV_{k}\\
&\leq -(p+1)(\alpha^{2}+\beta)\int_{\sn}|X|^{2\alpha-2}|\nabla h|^{4}dV_{k} +(\alpha^{2}+\beta)(n+2)\int_{\sn}|X|^{2\alpha-2}h^{2}|\nabla h|^{2}dV_{k}.
\end{split}
\end{equation}
Since $\alpha>0$, by the above argument, we also have
\begin{equation}
\begin{split}
\label{xx}
&2\alpha\int_{\sn}|X|^{2\alpha-2}h^{2}\sum_{i}\lambda_{i}\sigma^{ii}_{k+1}h^{2}_{i}d\sigma\leq 2\alpha(n-k) \int_{\sn}|X|^{2\alpha-2}h|\nabla h|^{2}(\Delta h+nh)dV_{k}\\
&\leq -2\alpha(n-k)(p+1)\int_{\sn}|X|^{2\alpha-2}|\nabla h|^{4}dV_{k}+2\alpha(n+2)(n-k)\int_{\sn}|X|^{2\alpha-2}h^{2}|\nabla h|^{2}dV_{k}.
\end{split}
\end{equation}
Applying \eqref{zz} and \eqref{xx} into \eqref{ee}, we have
\begin{equation}
\begin{split}
\label{c}
&2\int_{\sn}|X|^{2\alpha}h\sum_{i}\lambda_{i}\sigma^{ii}_{k}h^{2}_{i}d\sigma
+(\alpha^{2}+2\alpha)\int_{\sn}h^{2}|X|^{2\alpha-2}\sum_{i}\lambda_{i}\sigma^{ii}_{k+1}h^{2}_{i}d\sigma\\ &\leq [-(k+p-1)-(p+1)(\alpha^{2}+\beta+2\alpha(n-k))]\int_{\sn}|X|^{2\alpha-2}|\nabla h|^{4}dV_{k}\\
&\quad  + [-(k+p-1)+(n+2)(\alpha^{2}+\beta+2\alpha(n-k))]\int_{\sn}|X|^{2\alpha-2}h^{2}|\nabla h|^{2}dV_{k}.
\end{split}
\end{equation}
Note that $-(p+1)\leq (n+2)$. Choose $\alpha =\frac{-2(n-k)+\sqrt{4(n-k)^{2}+4\left( \frac{k+p-1}{n+2}-\beta \right)}}{2} $ with $0< \beta \leq 2k-2n-2+2\sqrt{(n-k+1)^{2}+\frac{k+p-1}{n+2}}$ so that $\alpha-\frac{\beta}{2}\geq 0$ and
\[
(n+2)(\alpha^{2}+\beta+2\alpha(n-k))=k+p-1.
\]
Then \eqref{c} becomes
\begin{equation*}
\begin{split}
&0\leq 2\int_{\sn}|X|^{2\alpha}h\sum_{i}\lambda_{i}\sigma^{ii}_{k}h^{2}_{i}d\sigma
+(\alpha^{2}+2\alpha)\int_{\sn}h^{2}|X|^{2\alpha-2}\sum_{i}\lambda_{i}\sigma^{ii}_{k+1}h^{2}_{i}d\sigma\\ &\leq \left(-(k+p-1)-(p+1)\frac{k+p-1}{n+2}\right)\int_{\sn}|X|^{2\alpha-2}|\nabla h|^{4}dV_{k}\leq 0.
\end{split}
\end{equation*}
Therefore, $\nabla  h\equiv0$ on $\sn$.

\end{proof}
\begin{rem}
If $p>1-k$, one can see that
\[
k+1+2k-2n-2+2\sqrt{(n-k+1)^{2}+\frac{k+p-1}{n+2}}>k+1.
\]
\end{rem}

\section*{Acknowledgment} We would like to thank Yingxiang Hu and Mohammad N. Ivaki for their many useful comments and suggestions on various drafts of this work. The authors also thank the referees for detailed reading and comments that are both  thorough and insightful.

{\bf Conflict of interest:} The authors declare that they have no conflict of interest.

{\bf Data availability:} Data sharing was not applicable to this article and no datasets were generated or analyzed during the current study.

\end{document}